\documentclass[12pt,twoside]{preprint}
\usepackage[T1]{fontenc}
\usepackage[margin=1.3in]{geometry}
\usepackage{amssymb}
\usepackage{amsmath}
\usepackage{hyperref}
\usepackage{breakurl}
\usepackage{mhenvs}
\usepackage{mhequ}
\usepackage{booktabs}
\usepackage{tikz} 
\usepackage{mathrsfs}
\usepackage{times}
\usepackage{microtype}
\usepackage{wasysym}
\usepackage{centernot}
\usepackage{appendix}

\makeatletter

\makeatother




\newcommand{\eqdef}{\stackrel{\mbox{\tiny def}}{=}}

\newcommand{\C}{\mathbb{C}}
\newcommand{\E}{\mathbb{E}}
\newcommand{\N}{\mathbb{N}}
\renewcommand{\P}{{\mathbb P}}

\newcommand{\R}{\mathbb{R}}
\newcommand{\T}{\mathbb{T}}
\newcommand{\Z}{\mathbb{Z}}



\newcommand{\nn}{\mathfrak{m}}

\newcommand{\KK}{\mathfrak{K}}


\newcommand{\Bb}{\mathcal{B}}
\newcommand{\Cc}{\mathcal{C}}
\newcommand{\Dd}{\mathcal{D}}


\newcommand{\ga}{\gamma}

\newcommand{\eps}{\varepsilon}
\let\epsilon\varepsilon
\newcommand{\ka}{\kappa}


\renewcommand{\subset}{\subseteq}


\newcommand{\msf}{\mathsf}



\newcommand{\ang}[1]{\left\langle #1 \right\rangle} 
\newcommand{\Vg}{V_{\gamma}} 								
\newcommand{\Vng}{V_{\gamma}^0} 								
\newcommand{\norm}[2]{\left\lvert #2 \right\rvert_{#1}}
\newcommand{\Norm}[2]{\left\Vert #2 \right\Vert_{#1}}
\newcommand{\bigNorm}[2]{\big\Vert #2 \big\Vert_{#1}}

\newcommand{\supp}{\mathrm{supp}}


\newcommand{\Oo}{\mathcal{O}}								
\newcommand{\Deltag}{\Delta_{\gamma}}						
\newcommand{\Hgbetab}{\mathcal{H}^{\gamma}_{\beta,b} }	
\newcommand{\Pgbeta}{\mathbf{P}^{\gamma}_{\beta,0} }		
\newcommand{\Pgbetab}{\mathbf{P}^{\gamma}_{\beta,b} }		
\newcommand{\Pgbetabg}{\mathbf{P}^{\gamma}_{\beta,b_\gamma} }		
\newcommand{\Egbeta}{\mathbf{E}^{\gamma}_{\beta,0} }		
\newcommand{\Egbetab}{\mathbf{E}^{\gamma}_{\beta,b} }		
\newcommand{\Egbetabg}{\mathbf{E}^{\gamma}_{\beta,b_\gamma} }		
\newcommand{\etaN}{\eta^N}		

\newcommand{\hg}{h_{\ga}} 								
\newcommand{\kg}{\ka_\ga} 								
\newcommand{\Kg}{K_\ga} 								
\newcommand{\LN}{\Lambda_N} 							
\newcommand{\Le}{\Lambda_\eps}		 					
\newcommand{\Ze}{\Z_\eps}		 					
\newcommand{\SN}{\Sigma_N} 							
\newcommand{\cg}{c_{\ga}}		 	 					
\newcommand{\Mg}{M_\ga}  								
\newcommand{\Xg}{X_{\ga}}		  						
\newcommand{\Xng}{X_\ga^0} 							
\newcommand{\Xn}{X^0} 							
\newcommand{\CGG}{\mathfrak{c}_{\ga}} 					

\newcommand{\Zg}{Z_{\ga}}  								

\newcommand{\taun}{\tau_{\ga,\nn}}							 

\newcommand{\CG}{\mathfrak{c}_\ga} 						
\newcommand{\Ex}{\msf{Ext}} 								
\newcommand{\Err}{\msf{Err}} 								

\newcommand{\hKg}{\hat{K}_{\ga}} 							

\renewcommand{\L}{\mathscr{L}}

\begin{document}

\title{Tightness of the Ising-Kac model on the two-dimensional torus}
\author{Martin Hairer$^1$ and Massimo Iberti$^2$}
\institute{Imperial College London, UK, \email{m.hairer@imperial.ac.uk} \and University of Warwick, UK, \email{m.iberti@warwick.ac.uk}}

\maketitle

\begin{abstract}
We consider the sequence of Gibbs measures of Ising models with Kac interaction defined on a periodic two-dimensional discrete torus near criticality.
Using the convergence of the Glauber dynamic proven by H. Weber and J.C. Mourrat \cite{MourratWeber} and a method by H. Weber and P. Tsatsoulis employed in \cite{tsatsoulis2016spectral},
we show tightness for the sequence of Gibbs measures of the Ising-Kac model near criticality and characterise the law of the limit as the $\Phi^4_2$ measure on the torus.

Our result is very similar to the one obtained by M.~Cassandro, R.~Marra, E.~Presutti \cite{cassandro1995corrections} on $\Z^2$, but our strategy takes advantage of the dynamic, instead of correlation inequalities.
In particular, our result covers the whole critical regime and does not require the large temperature / large mass / small coupling assumption present in earlier results.
\end{abstract}

\tableofcontents

\section{Introduction}
\label{sec:intro}
Let $N>0$ be a positive integer and consider the (periodic) lattice $\LN = \{1-N,\dots,N\}^2$. For $\gamma > 0$, let $\KK: \R^2 \to [0,1]$ be a twice differentiable, non negative, isotropic function supported on a ball of radius $3$ and define $\kg$ for $x \in \LN$
\begin{equation}
\label{e:samplek}
\kg(x) \simeq \gamma^2 \KK(\gamma|x|)\ ,\qquad \sum_{z \in \LN \setminus \{0\}} \kg(z) = 1\:.
\end{equation}
Consider a spin system formed by a set of spins parametrized by the lattice $\LN$. Each spin can assume the value $+1$ or $-1$ representing two possible states of the magnetization and we will denote with $\SN=\{-1,1\}^{\LN}$ the set of all possible configurations. The Ising-Kac model on the two-dimensional lattice with periodic boundary condition and external magnetization $b \in \R$ is given by the following Hamiltonian 
\begin{equation}
\label{e:hamiltonian}
\Hgbetab(\sigma) = \frac{\beta}{2} \sum_{x, y \in \LN} \kg(x-y) \sigma_x \sigma_y + b \sum_{x \in \LN} \sigma_x\, \qquad \sigma \in \SN \;.
\end{equation}
The Gibbs measure over $\Sigma_N$ associated to the potential \eqref{e:samplek}, with inverse temperature $\beta$ and external magnetic field $b$, is given by
\begin{equation}
\label{e:gibbsdef}
\Pgbetab (\sigma)\eqdef \left( Z_{\gamma,\beta,b}^N\right)^{-1} \exp\left( \Hgbetab(\sigma)\right)\;,
\end{equation}
where $Z_{\gamma,\beta,b}^N$ is the partition function that makes \eqref{e:gibbsdef} a probability measure. We will also denote with $\Egbetab$ the expectation under $\Pgbetab$. For technical reason, we set $\kg(0) = 0$ and we remark that its precise value doesn't effect \eqref{e:gibbsdef}.

As in \cite{MourratWeber}, we will let the inverse temperature $\beta$ converge in a precise way as $\gamma \to 0$ to $\beta_c=1$ the critical value of the mean-field system.
The purpose of this paper is to prove the tightness of the magnetisation fluctuation field
\begin{equation}
\label{e:discretefluctuationb}
\gamma^{-1} \left(\sigma_{\lfloor N \cdot \rfloor} - \Egbetab\left[\sigma_{\lfloor N \cdot \rfloor}\right] \right)
\end{equation}
in the strong topology of $\mathcal{S}'(\T^2)$ as both $N \to \infty$ and $\gamma \to 0$ in a precise way that we shall describe later.

Moreover, in case $b=0$, we are also able to characterise the limit as the $\Phi^4(\T^2)$ measure, formally described by
\begin{equation}
\label{e:Phi42measure}
\mathcal{Z}^{-1}\exp\left(-\int_{\T^2} \frac{1}{2} |\nabla \Phi(x)|^2 + \frac{1}{12}\Phi^{:4:}(x) -  \frac{A}{2}\Phi^{:2:}(x)\ dx\right) d \Phi\;,
\end{equation}
where $\Phi^{:4:}$ denotes the Wick renormalisation of the fourth power of the field. For a more detailed and formal definition, see for example \cite{Nelson,GlimmJaffe}.

The Ising-Kac model is a mean-field model with ferromagnetic long range potential that has been introduced in statistical mechanics for its simplicity and because it provides a framework to recover rigorously the van der Waals theory \cite{KUH1963vanderWaals} of phase transition. It has been then developed by Lebowitz and Penrose in \cite{LebowitzPenrose1966}, see also \cite{presutti2008scaling} for more details.
The model has already been useful to study the $\Phi^4_d$ theory, see \cite{Gawedzki1985}, where a renormalisation group approach has been used to approximate $\Phi^4_d$ with generalised Ising models, and \cite{Simon1973} with classical Ising spins. 

The present work is mainly built upon the article of Weber and Mourrat \cite{MourratWeber}, where the Glauber dynamic on a periodic two-dimensional lattice is shown to converge to the solution of the two-dimensional stochastic Allen-Cahn equation on the torus. Their approach however doesn't imply the tightness for the invariant measure of the model, which is treated in this article. The same result in one space dimension had previously been proven in \cite{MR1317994,Fritz1995}, via a coupling with a simpler model, the voter model. In a subsequent paper \cite{tsatsoulis2016spectral}, Tsatsoulis and Weber show the exponential convergence to equilibrium for the dynamical $\Phi^4_2$ model. 
In \cite{cassandro1995corrections}, the authors show the convergence of the 2d Ising-Kac model on $\Z^2$ to $\Phi^4_2$ by proving the convergence of the discrete Schwinger functions. In particular they were the first to explain, to the best of our knowledge, the small shift of the critical temperature for the Ising-Kac model with the renormalisation constants of the Wick powers. That result (see \cite[Thm~2]{cassandro1995corrections}) is however restricted to temperatures satisfying a condition allowing to use Aizenman's correlation inequalities, which corresponds to large \textit{negative}
values of $A$ in \eqref{e:Phi42measure}.

Our main result resembles the one obtained in \cite{cassandro1995corrections}, with some differences. We will work on a periodic lattice instead of $\Z^2$, which we think of as a discretisation of a 2D torus. This restriction is mainly due to our techniques for bounding the solutions globally in time and a posteriori doesn't appear to be strictly necessary since the limiting dynamic can be defined also on the whole 2D plane (see \cite{MourratWeberGlobal}). Moreover, as our proof exploits the dynamical version of the model and not the correlation inequalities, we do not have the restriction on the temperature present in \cite[Thm~2]{cassandro1995corrections}, so that we
cover arbitrary values $A \in \R$ in \eqref{e:Phi42measure}.

The structure of the present article is as follows: our main result is Theorem~\ref{trm:tightness} showing
tightness of the fluctuations of local averages of the magnetic field in a distributional space. The proof is based on the analysis of the dynamical $\Phi^4_2$ model in \cite[Sec.~3]{tsatsoulis2016spectral} and makes no use of correlation inequalities (not explicitly at least), avoids the restriction (1.8) of \cite{cassandro1995corrections} and exploits the regularisation provided by the time evolution of the Glauber dynamic. As a consequence of Theorem~\ref{trm:tightness}, we obtain in Corollaries~\ref{cor:tightness1} and \ref{cor:tightness2} tightness in $\mathcal{S}'(\T^2)$ for the fluctuation fields \eqref{e:discretefluctuationb}. 
 
In Theorem~\ref{trm:characterisation} we characterise the limit of each subsequence to be an invariant measure 
for the dynamical $\Phi^4_2$ model constructed in \cite{dPD}.
Since it was shown in \cite{dPD} that \eqref{e:Phi42measure} is such a measure and in \cite{tsatsoulis2016spectral} that this
invariant measure is unique, the result follows. For the proof, we make use of the uniform convergence to the invariant measure and the convergence of the Glauber dynamic in \cite{MourratWeber}.

\subsection{Notations}
\label{subsec:notations}
We shall consider spins arranged on a periodic lattice that we will think as embedded into a two-dimensional
torus $\T^2 = [-1,1]^2$. Let $\epsilon = N^{-1}$ and $\Le \eqdef \epsilon \Lambda_N \subset \T^2$ the 
discretisation induced on $\T^2$. 
For $f,g:\Le \to \C$ we define
\[
\Norm{L^p(\Le)}{f}^p \eqdef \sum_{x \in \Le} \epsilon^2 |f(x)|^p\;,\qquad \ang{f,g}_{\Le} \eqdef \sum_{x \in \Le} \epsilon^2 f(x) \overline{ g(x)}\;,
\]
respectively the discrete $L^p$ norm and the scalar product. We will use the discrete convolution
\[
(f \ast g)(x)\eqdef \sum_{y \in \Le}\epsilon^2 f(x-y)g(y)\;,\qquad\text{ for } x \in \Le\;,
\]
and, when there is no possibility of confusion, we will drop the set $\Le$ from the above definitions. We will make an extensive use of the Fourier transform
\[
\widehat{f}(w) \eqdef \ang{f, e_w}_{\Le}\;,\qquad e_w(x) = e^{i \pi w \cdot x}\qquad \text{ for } w \in \Lambda_N\;.
\]
It will sometimes be convenient to also set $e_w = 0$ for $w \in \Z^2 \setminus \Lambda_N$.
With this notation, the Fourier inversion formula reads
\begin{equation}
\label{e:Fourierinversion}
f(x) = \frac{1}{4} \sum_{w \in \Z^2}  \widehat{f}(w) e_w(x) \qquad \text{ for } x \in \Le \:.
\end{equation}
We shall use the same notation $\Ex(f)$ as in \cite{MourratWeber} to denote the extension of $f$ to the continuous torus $\T^2$ via \eqref{e:Fourierinversion} applied to $x \in \T^2$. We recall furthermore the fact that the operator $\Ex$ doesn't commute with the operation of taking the product. (Of course we could have used extensions that do commute, but $\Ex$ behaves nicely with respect to the scale of Besov spaces.) We will measure the regularity of a function $g:\T^2 \to \R$ (or $g:\Le \to \R$) with the Besov norm, defined for $\nu \in \R$, and $p,q \in [1,\infty]$ as
\begin{equ}[e:Besov]
\Norm{\Bb^{\nu}_{p,q}}{g} = \begin{cases}
\left( \sum_{k \geq -1} 2^{\nu k q} \Norm{L^p(\T^2)}{\delta_k g}^q \right)^{\frac{1}{q}} & \text{ if }q < \infty\\
\sup_{k \geq -1} 2^{\nu k} \Norm{L^p(\T^2)}{\delta_k g}& \text{ if }q = \infty
\end{cases}
\end{equ}
(see \eqref{e:PaleyLittlewoodproj} below for the definition of the Paley-Littlewood projection $\delta_k$) and we will denote 
by $\Bb^{\nu}_{p,q}$ the completion of the set of smooth test functions over the torus equipped with the 
corresponding Besov norm. We 
shall denote by $\Cc^{\nu}$ the (separable) Besov space $\Bb^{\nu}_{\infty,\infty}$. In particular, the parameter 
$\nu \in \R$ represents the regularity of a function and the space $\Bb^{\nu}_{p,q}$ contains distributions if $\nu < 0$.
It will be useful to consider, for $g : \Le \to \R$, a discrete version of the Besov norm, that we shall denote 
by $\Norm{\Bb^{\nu}_{p,q}(\Le)}{g}$ (resp. $\Norm{\Cc^{\nu}(\Le)}{g}$),
\[
\Norm{\Bb^{\nu}_{p,q}(\Le^d)}{f} \eqdef \begin{cases}
\left( \sum_{k \geq -1} 2^{\nu k q} \Norm{L^p(\Le^d)}{\delta_k \Ex(f)}^q \right)^{\frac{1}{q}} & \text{ if }q < \infty\\
\sup_{k \geq -1} 2^{\nu k} \Norm{L^p(\Le^d)}{\delta_k \Ex(f)}& \text{ if }q = \infty
\end{cases}
\]
see Section~\ref{sec:discreteBesov} for a more precise description and for some useful properties of this norm used in the article.

\subsection*{Acknowledgements}

{\small
We are grateful to H.~Weber and P.~Tsatsoulis for many discussions on the topic of this article.
MH gratefully acknowledges financial support from the Leverhulme trust
as well as the ERC via the consolidator grant 615897:CRITICAL.
}

\section{Definitions and statements of the theorem}
\label{sec:def-stat}
Assume for the moment that $b=0$, which is also the case studied in \cite{MourratWeber} and consider for $x \in \Lambda_N$
\begin{equation}
\label{e:hgdef}
\hg(x) \eqdef \sum_{z \in \Lambda_N} \kg(x-z) \sigma_z\;,
\end{equation}
where the kernel is the same as in \eqref{e:hamiltonian}.

Following \cite{MR1317994,MourratWeber}, we define the magnetisation fluctuation field over the lattice $\Le$ as $\Xg(z) = \gamma^{-1} \hg(\epsilon^{-1}z)$.
We will consider a dynamic of Glauber type on $\Sigma_N$ in order to gain insight into the 
properties of the fluctuations. In order for this dynamic to converge to a non-trivial limit, we will 
enforce the relation between the scalings $\epsilon$ and $\gamma$ given by \eqref{e:scalarbound}. 

The dynamic can be described informally as follows. Each site $x \in \LN$ is assigned an independent 
exponential clock with rate $1$. When the clock rings, the corresponding spin changes sign with probability
\begin{equation}
\cg(z,\sigma) = \frac{1}{2}\left( 1 - \sigma_z \tanh\left(\beta \hg(z)\right) \right)\;,
\end{equation}
and remains unchanged otherwise.
More formally, the generator of this dynamic is given by
\begin{equation}
\label{e:generatorGlauberch3}
\L_{\gamma} f(\sigma) =  \sum_{z \in \LN} \cg(z,\sigma)\left( f(\sigma^{\{z\}})-f(\sigma) \right)\;,
\end{equation}
for $f: \SN \to \R$, where 
\[
\sigma^{\{z\}}_y = \begin{cases}
-\sigma_z & \text{ if } y = z,\\
\sigma_y & \text{ if } y \neq z.
\end{cases}
\]
The probabilities $\cg(z,\sigma)$ are chosen precisely in such a way that 
$\Pgbeta$ is invariant for this Markov process.
We shall use the notations $\sigma_x(s)$ and $\hg(s,x)$ to refer to the process at (microscopic) space 
$x \in \Lambda_N$ and time $s \in \R_+$. We will use the notation $\P_{\beta,0}^{\gamma}$~(resp.~$\E_{\beta,0}^{\gamma}$) to refer to the probability (resp.~expectation) of the process started 
with an initial condition drawn from $\Pgbeta$.

In order to rewrite the process in macroscopic coordinates, we speed up the generator $\L_{\gamma}$ by a factor $\alpha^{-1}$ and we will abuse the notation writing
\begin{equation}
\label{e:definitionXgGlauber}
\Xg(s,x) \eqdef \delta^{-1}\hg(\alpha^{-1}s,\epsilon^{-1}x)\;,
\end{equation}
in (macroscopic) space $x \in \Le$ and time $s \in \R_+$.
In \cite[Thm~3.2]{MourratWeber} it is proven that, if the parameters $\delta, \alpha, \epsilon$ and the inverse temperature $\beta$ are chosen such that
\begin{equation}
\label{e:scaling}
\delta = \gamma \:, \qquad \alpha = \gamma^2 \:, \qquad \epsilon = \gamma^2 \:, \quad \beta - 1 = \alpha \left( \CG + A\right)\;,
\end{equation}
where $\CG$ is described in \eqref{e:renormconst} below, and if the sequence of initial conditions satisfies $\Xg(0) \to \Xng$ in $\Cc^{-\nu}$, then the law of $\Xg$ on $\Dd(\R_+,\Cc^{-\nu})$, converges in distribution to the solution of the stochastic quantisation equation
\begin{equ}[e:dynamicalPhi42]
\partial_t X = \Delta X - \frac{1}{3}   X^{:3:} + A X + \sqrt{2} \xi\;,\qquad
X(0,\cdot) = X^0 \in \Cc^{-\nu}
\end{equ}
where $\Xg(\cdot,0) \to X_0$ in $\Cc^{-\nu}$ and $\xi$ denotes space-time white noise. The expression $X^{:3:}$ stands for a renormalised power defined as in \cite{dPD}, where the relevant notion of ``solution'' to \eqref{e:dynamicalPhi42} is also given. The solution theory of \eqref{e:dynamicalPhi42} will be briefly summarised in Subsection~\ref{subsec:limitingeq}. The use of the renormalised powers is necessary since the solution belongs to a distributional space.



For $x \in \Le$, let $\Kg(x) = \epsilon^{-2} \kg(\epsilon^{-1} x)$, the macroscopic version of the kernel $\Kg$, and define the discrete Laplacian $\Deltag f = \epsilon^{-2}\gamma^2 (\Kg \ast f - f)$. Under the Glauber dynamic, the process $\Xg$ satisfies on $[0,T] \times \Le$
\begin{equs}
\Xg&(t,x) = \Xg(0,x) \label{e:Xgequation}\\
&+ \int_0^t \Deltag \Xg(s,x) - \frac{1}{3} \left( \Xg^3(s,x) -\CG \Xg(s,x) \right) + A \Xg(s,x) + \Oo(\gamma^2 \Xg^5(s,x))\ ds\\
&+ \Mg(t,x) 
\end{equs}
where $\Mg(t,x)$ is a martingale and $\CG$ is the logarithmically diverging constant
\begin{equation}
\label{e:renormconst}
\CG = \frac{1}{4}\sum_{\omega \in \Lambda_N \setminus \{0\}} \frac{|\hat{\Kg}(\omega)|^2}{\epsilon^{-2}\gamma^2(1 - \hat{\Kg}(\omega))}\:.
\end{equation}

The next theorem is the main result of the paper. Recall the definition of the Besov norm given in Subsection~\ref{subsec:notations}. We think of $\Xg(z)$ as being a random function on $\T^2$, having been extended to $\T^2$ with the $\Ex$ operator.
\begin{theorem}
\label{trm:tightness}
Assume $b=0$. Then for all positive $\nu > 0$ and for all $q > 0$
\[
\limsup_{\gamma \to 0} \Egbeta\left[\Norm{\Cc^{-\nu}}{\Xg}^q \right] < \infty\;.
\]
In particular, the laws of $\Xg$ form a tight set of probability measures on $\Cc^{-\nu}$.
\end{theorem}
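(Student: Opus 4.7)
The plan is to exploit the invariance of $\Pgbeta$ under the Glauber dynamics with generator $\L_\gamma$. If we run the dynamics starting from $\sigma(0)\sim \Pgbeta$, then $\Xg(t)$ has the law $\Pgbeta$ (viewed through the fluctuation field) for every $t\ge 0$. Consequently, to bound $\Egbeta\bigl[\Norm{\Cc^{-\nu}}{\Xg}^q\bigr]$ uniformly in $\gamma$, it suffices to establish, for some fixed $t_0>0$,
\[
\sup_{\gamma} \mathbf{E}\bigl[\Norm{\Cc^{-\nu}}{\Xg(t_0)}^q\bigr]<\infty,
\]
with a bound that is uniform in the (random) initial condition. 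This is precisely the kind of ``coming down from infinity'' estimate established in \cite[Sec.~3]{tsatsoulis2016spectral} for the continuous $\Phi^4_2$ dynamic; the bulk of the work is to adapt that argument to the discrete equation \eqref{e:Xgequation}.

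The first step is a Da Prato--Debussche decomposition $\Xg = Y_\gamma + v_\gamma$, where $Y_\gamma$ solves the linear discrete equation $\partial_t Y_\gamma = \Deltag Y_\gamma + \dot{\Mg}$ with zero initial datum, and $v_\gamma$ absorbs both the initial condition and the nonlinear terms. Expanding $(Y_\gamma+v_\gamma)^3$ and using $\CG$ to renormalise the self-interactions of $Y_\gamma$ recasts \eqref{e:Xgequation} as an equation for $v_\gamma$ driven by the discrete renormalised objects $Y_\gamma$, $Y_\gamma^{:2:}$ and $Y_\gamma^{:3:}$, plus the small remainder $\Oo(\gamma^2 \Xg^5)$. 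The second step is to control these stochastic objects uniformly in $\gamma$, obtaining moment bounds in $\Cc^{-\nu}$ for every $\nu>0$. This is carried out via Burkholder--Davis--Gundy applied to $\Mg$, whose predictable quadratic variation is explicit thanks to the Poisson structure of the Glauber flips, combined with a Kolmogorov-type criterion for the discrete Besov norms from Section~\ref{sec:discreteBesov}. In Fourier, these estimates mirror those for the stochastic heat equation on $\T^2$ because the symbol of $\Deltag$ agrees with that of $\Delta$ up to higher-order corrections.

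The third and hardest step is a deterministic a priori estimate for $v_\gamma$ that is \emph{independent of its initial datum}. For each realisation of the stochastic data, $v_\gamma$ satisfies a discrete equation of the schematic form
\[
\partial_t v_\gamma = \Deltag v_\gamma - \tfrac{1}{3}v_\gamma^3 + R_\gamma\bigl(v_\gamma, Y_\gamma, Y_\gamma^{:2:}, Y_\gamma^{:3:}\bigr) + \Oo(\gamma^2 \Xg^5),
\]
where $R_\gamma$ is of strictly sub-cubic degree in $v_\gamma$. Testing against $v_\gamma^{2n-1}$ and using Young's inequality to absorb $R_\gamma$ and the quintic error into $-\tfrac{1}{3}v_\gamma^3$ (leveraging the smallness of $\gamma^2$ and the already established moments of the stochastic data) yields $L^{2n}$ bounds for $v_\gamma(t)$ which decay like a positive power of $1/t$ and do not depend on $v_\gamma(0)$ at all. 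A short-time smoothing step using the discrete semigroup $e^{t\Deltag}$ then upgrades $L^{2n}$ control into $\Cc^{\alpha}$ control for some $\alpha>0$, which dominates the $\Cc^{-\nu}$ norm of $v_\gamma(t_0)$.

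Combining the three steps and invoking stationarity at $t=t_0$ gives the uniform moment bound, and tightness in $\Cc^{-\nu}$ follows by compactness of the embedding $\Cc^{-\nu/2}\hookrightarrow \Cc^{-\nu}$ and Markov's inequality. The principal obstacle is the third step: one must carry out the $L^{2n}$ energy estimate in the discrete setting while keeping track of the lack of commutativity between $\Ex$ and pointwise products, controlling the $\Oo(\gamma^2 \Xg^5)$ remainder without reintroducing a dependence on $\Xg(0)$, and ensuring that all constants that arise from discretisation are bounded uniformly as $\gamma \to 0$. Once this deterministic estimate is established, the probabilistic tightness statement falls out essentially for free.
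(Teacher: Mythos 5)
Your overall architecture---invariance of $\Pgbeta$, a Da Prato--Debussche split $\Xg=\Zg+\Vg$, an $L^{2n}$ energy estimate by testing against $v_\gamma^{2n-1}$ with a decay independent of the initial datum, and control of the quintic remainder via a crude a priori moment bound---is exactly the paper's route (cf.\ Propositions~\ref{prop:nonoptimalbound} and~\ref{prop:Vgcomparison}). However, your third step contains a genuine gap: the claimed ``short-time smoothing step using the discrete semigroup $e^{t\Deltag}$'' that ``upgrades $L^{2n}$ control into $\Cc^\alpha$ control for some $\alpha>0$'' does \emph{not} go through. The Kac Laplacian $\Deltag$ only has symbol comparable to $|\omega|^2$ for frequencies $|\omega|\lesssim\gamma^{-1}$; for $\gamma^{-1}\lesssim |\omega|\lesssim\epsilon^{-1}$ it saturates at order $\gamma^{-2}$ (Proposition~\ref{prop:kernelbounds}). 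Hence $e^{t\Deltag}$ provides no parabolic gain of regularity at scales finer than $\gamma$, and one cannot bootstrap $L^{2n}$ bounds to any $\Cc^\alpha$ with $\alpha>0$.

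What the paper does instead is quantitatively weaker but sufficient. It never aims for positive regularity: it uses the embedding $\Norm{\Cc^{-\nu}}{\cdot}\lesssim\Norm{L^p(\T^2)}{\cdot}$ (Proposition~\ref{prop:boundBesovLp}), and then, because the extension $\Ex$ does not commute with nonlinear operations, it must bound $\Norm{L^p(\T^2)}{\Ex\Vg}$ by the discrete $\Norm{L^p(\Le)}{\Vg}$ plus a discrete gradient term (Lemma~\ref{lemma:Lpdiscretebound}). That gradient term is controlled only through the $\langle \Vg^{p-1},(-\Deltag)\Vg\rangle$ ``energy'' dissipation appearing on the favourable side of the $L^p$ estimate---and crucially, that dissipation is bounded only after integrating in time (see \eqref{e:boundonVgLpoptimal1} and \eqref{e:boundonVgLpoptimal3}). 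This forces the time-averaging identity $\Egbeta[\Norm{\Cc^{-\nu}}{\Xg}^q]=\frac{2}{T}\int_{T/2}^T\E_{\beta,0}^\gamma[\Norm{\Cc^{-\nu}}{\Xg(s)}^q]\,ds$ rather than evaluation at a single $t_0$; the factor $\epsilon^{-\kappa}$ from Lemma~\ref{lemma:Lpdiscretebound} is then offset by the factor $\epsilon^2\gamma^{-2}=\gamma^2$ arising when converting the $\epsilon$-scale gradient into the $\Deltag$-energy, which requires $\kappa$ small and uses $\epsilon=\gamma^2$ essentially. Your proposal omits the extension-operator issue, the gradient term, and the time-averaging trick, and these are precisely the places where the discrete problem departs from the continuous $\Phi^4_2$ estimate you are adapting.
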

From the above theorem it is possible to deduce
\begin{corollary}
\label{cor:tightness1}
Assume $b=0$. Then the law of the field $\left(\gamma^{-1}\sigma_{\lfloor \epsilon^{-1} x\rfloor}\right)_{x \in \T^2}$ is tight in $\mathcal{S}'(\T^2)$ under $\Pgbeta$.
\end{corollary}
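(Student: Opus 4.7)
The plan is to deduce Corollary~\ref{cor:tightness1} from Theorem~\ref{trm:tightness} by showing that the spin field $\gamma^{-1}\sigma_{\lfloor\epsilon^{-1}\cdot\rfloor}$ and the locally averaged field $\Xg$ differ, as distributions on $\T^2$, by a remainder of order $\gamma$. Writing $\sigma^\epsilon(y) \eqdef \sigma_{\lfloor\epsilon^{-1}y\rfloor}$, the definition of $\Xg$ gives $\Xg = \gamma^{-1}(\Kg \ast_\epsilon \sigma^\epsilon)$ in macroscopic coordinates on $\Le$, so I decompose
\[
\gamma^{-1}\sigma^\epsilon = \Xg + R_\gamma\,,\qquad R_\gamma \eqdef \gamma^{-1}(I - \Kg \ast_\epsilon)\sigma^\epsilon \;.
\]
By Theorem~\ref{trm:tightness}, $\Xg$ is tight in $\Cc^{-\nu}$, hence also in the weaker space $H^{-s}(\T^2)$ for any $s \geq \nu$. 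It therefore suffices to prove a deterministic bound $\|R_\gamma\|_{H^{-s}(\T^2)} \lesssim \gamma$ for some $s$; after that, $\gamma^{-1}\sigma^\epsilon$ is bounded in probability in $H^{-s}(\T^2)$, hence tight in $H^{-s'}(\T^2)$ for any $s' > s$ by Rellich compactness, and therefore tight in $\mathcal{S}'(\T^2)$.

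Fix $s > 3$ so that $H^s(\T^2) \hookrightarrow C^2(\T^2)$, and test $R_\gamma$ against $\varphi \in C^\infty(\T^2)$ with $\|\varphi\|_{H^s} \leq 1$. The symmetry of $\Kg$ allows me to transfer the operator onto the test function:
\[
\langle R_\gamma, \varphi\rangle = \gamma^{-1}\langle \sigma^\epsilon, (I - \Kg \ast_\epsilon)\varphi\rangle \;.
\]
Since $\Kg$ is isotropic, even, and concentrates on a macroscopic ball of radius $\sim\gamma$ with second moment $\sum_y \epsilon^2 \Kg(y)|y|^2 = O(\gamma^2)$---a consequence of $\Kg(x) = \epsilon^{-2}\kg(\epsilon^{-1}x)$ together with $\epsilon = \gamma^2$---a second-order Taylor expansion, whose first-order contribution cancels by symmetry, gives
\[
\bigl\|(I - \Kg \ast_\epsilon)\varphi\bigr\|_{L^\infty} \lesssim \gamma^2 \|D^2\varphi\|_{L^\infty} \lesssim \gamma^2 \|\varphi\|_{H^s}\;.
\]
Combined with the pointwise bound $|\sigma^\epsilon| \leq 1$ and the finite area of $\T^2$, this yields $|\langle R_\gamma, \varphi\rangle| \lesssim \gamma$. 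Taking the supremum over $\|\varphi\|_{H^s}\leq 1$, and noting that the discrepancy between the lattice pairing and the $\T^2$-integral pairing is itself of order $\epsilon\gamma^{-1}\|\nabla\varphi\|_{L^\infty} = O(\gamma)$, gives $\|R_\gamma\|_{H^{-s}(\T^2)} \lesssim \gamma$ uniformly in the realization.

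The main technical step is the Taylor estimate on the discrete convolution $I - \Kg\ast_\epsilon$, which essentially relies on the scaling $\epsilon \ll \gamma$: this ensures both that the discrete second moment of $\Kg$ matches its continuous counterpart up to lower-order corrections, and that the piecewise-constant approximation of the $\T^2$-integral pairing by the lattice pairing on $\Le$ incurs an error of only $O(\gamma)$. Everything else is routine: combining $\|\gamma^{-1}\sigma^\epsilon\|_{H^{-s}} \leq \|\Xg\|_{H^{-s}} + O(\gamma)$ with Theorem~\ref{trm:tightness} and Rellich's compact embedding gives tightness in the appropriate negative Sobolev space, and hence in $\mathcal{S}'(\T^2)$.
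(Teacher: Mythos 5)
Your proposal is correct and follows essentially the same route as the paper's proof: decompose $\gamma^{-1}\sigma^\epsilon$ into $\Xg$ plus a remainder, transfer the kernel onto a smooth test function by symmetry of $\Kg$, invoke the second-order Taylor expansion (the first-order term vanishing by isotropy, the second moment of $\Kg$ being $O(\gamma^2)$) to obtain an $O(\gamma)$ error, and conclude from Theorem~\ref{trm:tightness}. The paper packages this as a direct comparison of pairings against $\varphi \in \mathcal{S}(\T^2)$ working in $H^{-k}$ for $k$ large, whereas you phrase it as a uniform deterministic $H^{-s}$ bound on $R_\gamma = \gamma^{-1}(I-\Kg\ast_\epsilon)\sigma^\epsilon$ followed by Rellich compactness, but the underlying estimate is the same.
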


\begin{proof}
We will actually prove the tightness in the stronger norm of $H^{-k}(\T^2)$, for $k$ sufficiently big. Let $\varphi \in \mathcal{S}(\T^2)$ and consider
\[
\ang{\gamma^{-1}\sigma_{\lfloor \epsilon^{-1} \cdot\rfloor}, \varphi}_{\T^2} = \sum_{x \in \LN} \epsilon^2\left(\gamma^{-1}\sigma_{x} \right)\bar{\varphi}(\epsilon x)
\]
where $\bar{\varphi}(\epsilon x) = \epsilon^{-2} \int_{|y|_{\infty}\leq 2^{-1}\epsilon} \varphi(\epsilon x+y)\ dy$. Using the differentialbility of $\varphi$, we replace $\bar{\varphi}$ with $\kg \ast \bar{\varphi}$ at the cost of 
\[
\epsilon^2 \gamma^{-2} \sup_{i_1,i_2\in \{1,2\}}\Norm{L^{\infty}(\T^2)}{\partial_{i_1}\partial_{i_2}\varphi}\;,
\]
and this is $\Oo(\gamma^2)$, as $\gamma\to 0$, if $k$ is sufficiently big. Therefore (recall the form of the extension \eqref{e:Fourierinversion} of $\Xg$ to the continuous torus)
\[
\ang{\gamma^{-1}\sigma_{\lfloor \epsilon^{-1} \cdot\rfloor}, \varphi}_{\T^2} = \ang{\Xg, \varphi}_{\T^2} + \Oo(\gamma)
\]
the corollary follows from Theorem~\ref{trm:tightness}.
\end{proof}

As remarked in the proof, the topology with respect to which the convergence in Corollary~\ref{cor:tightness1} is proved is not the optimal norm. Indeed we expect the result to hold also with respect to the norm of $\Cc^{-\nu}$. In the proof of Corollary~\ref{cor:tightness1} we didn't only show the tightness of the sequence of random variable, but we also proved that the limit of $\ang{\gamma^{-1}\sigma_{\lfloor \epsilon^{-1} \cdot\rfloor}, \varphi}_{\T^2}$ coincide with $\lim_{\gamma\to 0}\ang{\Xg, \varphi}_{\T^2}$ for all $\varphi$ sufficiently smooth.

We now show how to extend the previous result to the case $b \neq 0$. It is clear that, by symmetry it is sufficient to assume $b \geq 0$.
In the case of ferromagnetic pair potential $\kg \geq 0$ with positive external magnetisation $b \geq 0$, one has
\begin{equation}
\label{e:correlationineq}
\Egbetab\left[\sigma_x ;  \sigma_y \right] \leq \Egbeta\left[\sigma_x ;  \sigma_y \right]
\end{equation}
where $\Egbetab\left[\sigma_x ;  \sigma_y \right]$ is the covariance between the spins. This follows from the fact that
$
\frac{d}{db}\Egbetab\left[\sigma_x ;  \sigma_y \right] \leq 0
$,
which is an immediate consequence of the GHS inequality (see for instance \cite{Lebowitz1974inequalities} for a proof), valid for $\kg \geq 0$ and $b \geq 0$.

\begin{corollary}
\label{cor:tightness2}
Consider any map $\gamma \mapsto b_\gamma \geq 0$ and denote by $m_{\gamma}(b) = \Egbetab[\sigma_x]$ the mean of the spin $\sigma_x$, which is independent of $x \in \LN$. Then the law of the field 
\begin{equ}[e:scaled]
\tilde X_\gamma(x) = \gamma^{-1}\bigl(\sigma_{\lfloor \epsilon^{-1} x\rfloor} - m_{\gamma}(b_\gamma)\bigr)\;,\qquad x \in \T^2\;,
\end{equ}
is tight in $\mathcal{S}'(\T^2)$ under $\Pgbetabg$.
\end{corollary}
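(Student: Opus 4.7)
My plan is to strengthen the conclusion: I would show tightness of $\tilde X_\gamma$ in the Sobolev space $H^{-k}(\T^2)$ for any $k>3$, which implies tightness in $\mathcal{S}'(\T^2)$ via the compact embedding $H^{-k}\hookrightarrow H^{-k-1}\hookrightarrow\mathcal{S}'(\T^2)$ together with Chebyshev. The required bound is
\begin{equation*}
\sup_{\gamma>0}\,\Egbetabg\!\left[\|\tilde X_\gamma\|_{H^{-k}}^2\right]<\infty.
\end{equation*}

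The first step is to rewrite the Sobolev norm in kernel form,
\begin{equation*}
\|f\|_{H^{-k}}^2=\iint_{\T^2\times\T^2}G_k(x-y)\,f(x)\,f(y)\,dx\,dy,
\end{equation*}
where $G_k$ is the periodic kernel of $(1-\Delta)^{-k}$. The crucial analytic input is that $G_k\geq 0$: the Bessel potential on $\R^2$ is a nonnegative function, and the periodic kernel is a sum of translates of it. Setting $x'=\lfloor\epsilon^{-1}x\rfloor$ and similarly for $y'$, the centering built into $\tilde X_\gamma$ gives $\Egbetabg[\tilde X_\gamma(x)\tilde X_\gamma(y)]=\gamma^{-2}\Egbetabg[\sigma_{x'};\sigma_{y'}]$, which by \eqref{e:correlationineq} together with the classical positivity of ferromagnetic covariances satisfies $0\leq\Egbetabg[\sigma_{x'};\sigma_{y'}]\leq\Egbeta[\sigma_{x'}\sigma_{y'}]$. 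Integrating against $G_k\geq 0$ yields
\begin{equation*}
\Egbetabg\!\left[\|\tilde X_\gamma\|_{H^{-k}}^2\right]\leq\Egbeta\!\left[\|Y_\gamma\|_{H^{-k}}^2\right],\qquad Y_\gamma(x)\eqdef\gamma^{-1}\sigma_{\lfloor\epsilon^{-1}x\rfloor},
\end{equation*}
reducing the problem entirely to the symmetric model.

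It then remains to bound $\Egbeta[\|Y_\gamma\|_{H^{-k}}^2]$ uniformly in $\gamma$. For $k>3$ one has $H^k(\T^2)\hookrightarrow C^2(\T^2)$, so the Taylor argument in the proof of Corollary~\ref{cor:tightness1} (replacing $\bar\varphi$ by $\kg\ast\bar\varphi$ at a cost controlled by $\|\varphi\|_{C^2}$) run at the operator-norm level gives the deterministic estimate $\|Y_\gamma-\Xg\|_{H^{-k}}\lesssim\gamma$. Combined with Theorem~\ref{trm:tightness} and the embedding $\Cc^{-\nu}\hookrightarrow H^{-k}$, this gives the desired bound and closes the argument. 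The main subtlety is the positivity of $G_k$: this is exactly what allows the pointwise GHS comparison to survive integration into a norm, and it is the reason for working on the Sobolev scale rather than directly with $\Cc^{-\nu}$, whose Littlewood-Paley projections have sign-changing kernels and for which the same transfer would require additional work.
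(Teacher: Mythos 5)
Your proof is correct, and it takes a genuinely different route from the paper. Both arguments hinge on the same core input — the GHS-derived comparison $\Egbetabg[\sigma_x;\sigma_y]\le\Egbeta[\sigma_x;\sigma_y]$ — but deploy it with different positivity ingredients. The paper fixes a test function $\varphi$, splits it as $\varphi=\varphi^+-\varphi^-$, and exploits that $\kg\ast\bar\varphi^\pm\ge0$ so that the pointwise covariance bound can be integrated against the positive test function; it then needs the interpolation $\Norm{\Bb^\nu_{1,1}}{\varphi^\pm}\lesssim\Norm{L^1}{\varphi}+\Norm{L^\infty}{\nabla\varphi}$ to relate the resulting bound to Theorem~\ref{trm:tightness}. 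You instead observe that the kernel $G_k$ of $(1-\Delta)^{-k}$ is nonnegative (Bessel kernel, periodized), so the covariance comparison can be integrated directly into a bound for the $H^{-k}$ norm without any decomposition of test functions and without the Besov interpolation step; tightness then follows by compact Sobolev embedding rather than by controlling a family of pairings. Your reduction $\Norm{H^{-k}}{Y_\gamma-\Xg}\lesssim\gamma$ via duality and $H^k(\T^2)\hookrightarrow C^2(\T^2)$ for $k>3$ is a clean reformulation of the same Taylor argument used in Corollary~\ref{cor:tightness1}, and the embedding $\Cc^{-\nu}\hookrightarrow H^{-k}$ closes the loop with Theorem~\ref{trm:tightness}. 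The trade-off is that your argument relies on the (classical but less elementary) fact that Bessel kernels are pointwise nonnegative, whereas the paper's positive-test-function trick uses only the nonnegativity of $\kg$; on the other hand, your route avoids the $\varphi^\pm$ bookkeeping and gives a moment bound on a genuine norm at once. One small remark: the lower bound $\Egbetabg[\sigma_x;\sigma_y]\ge0$ from GKS that you invoke is not actually needed — only the GHS upper bound together with $G_k\ge0$ is used in the chain of inequalities.
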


\begin{proof}
Fixing a test function $\phi$ and replacing $\bar{\varphi}$ with $\kg \ast \bar{\varphi}$ as in Corollary~\ref{cor:tightness1}, we have
\[
\ang{\tilde X_\gamma, \varphi} = \sum_{x \in \LN} \epsilon^2\gamma^{-1}\big(\sigma_{x}-m_{\gamma}(b_\gamma) \big)(\kg \ast \bar{\varphi})(\epsilon x) + \Err\;,
\]
where $\Err$ converges to $0$ in probability as $\gamma \to 0$.
Decompose $\varphi = \varphi^+ - \varphi^-$ into its positive and negative part. For each of them, using the correlation inequality \eqref{e:correlationineq}, we have that
\[
\Egbetabg\Bigg|\epsilon^2  \sum_{x \in \LN}  \frac{\sigma_{x}-m_{\gamma}(b_\gamma) }{\gamma}(\kg \ast \overline{\varphi}^{\pm})(\epsilon x) \Bigg|^2 \leq \Egbeta\Bigg|  \epsilon^2\sum_{x \in \LN} (\gamma^{-1}  \sigma_{x})\ (\kg \ast \overline{\varphi}^{\pm})(\epsilon x) \Bigg|^2\;.
\]
Using Theorem~\ref{trm:tightness} we see that, for $\nu \in (0,1)$, this quantity is bounded uniformly
by a fixed multiple of 
$\Norm{\Bb^{\nu}_{1,1}}{\varphi^{\pm}}^2\Egbeta\left[ \Norm{\Cc^{-\nu}}{\Xg}^2 \right]$, up to an error of order $\Oo(\gamma)$.
In order to conclude, we observe that
\[
\Norm{\Bb^{\nu}_{1,1}}{\varphi^{\pm}} \lesssim \Norm{L^1}{\varphi^{\pm}} + \Norm{\text{Lip}}{\varphi^{\pm}}^{\nu}\Norm{L^1}{\varphi^{\pm}}^{1-\nu} \lesssim \Norm{L^1}{\varphi} + \Norm{L^{\infty}}{\nabla \varphi}
\]
where the first inequality is \eqref{e:besovderivboundcontinuous}, generalised to Lipschitz functions.
\end{proof}

The next theorem shows that in the symmetric case $b=0$, the limit of these measures is given by the 
$\Phi^4_2$ measures, as already suggested in \cite{MourratWeber}.

\begin{theorem}
\label{trm:characterisation}
Assume $b=0$. Then any limiting law of the sequence $\{\Xg\}_{\gamma}$ is invariant for the dynamic \ref{e:dynamicalPhi42}, and hence, by \cite{tsatsoulis2016spectral} and \cite[Remark~4.3]{dPD}, coincides with the $\Phi^4(\T^2)$ measure.
\end{theorem}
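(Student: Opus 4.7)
The strategy is to exploit the invariance of $\Pgbeta$ under the Glauber dynamic and push this invariance through to any subsequential limit via the convergence result \cite[Thm~3.2]{MourratWeber}. By Theorem~\ref{trm:tightness}, the family of laws of $\Xg$ is tight on $\Cc^{-\nu}$, so along some subsequence $\gamma_k \to 0$ they converge to a probability measure $\mu$. I will show that $\mu$ is invariant under the Markov semigroup $(P_t)_{t\geq 0}$ of the dynamical $\Phi^4_2$ equation \eqref{e:dynamicalPhi42}. Uniqueness, coming from \cite{tsatsoulis2016spectral} together with the invariance of the $\Phi^4(\T^2)$ measure in \cite[Remark~4.3]{dPD}, will then identify $\mu$ with the $\Phi^4(\T^2)$ measure.

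Concretely, I would first apply Skorokhod's representation theorem to realise the weak convergence $\Xg(0) \to X^0$ as almost sure convergence in $\Cc^{-\nu}$ on a common probability space, with $\mathrm{Law}(X^0) = \mu$. Second, I would run the Glauber dynamic with these coupled initial data; \cite[Thm~3.2]{MourratWeber}, combined with continuity in initial condition of the solution map to \eqref{e:dynamicalPhi42} provided by \cite{dPD}, yields convergence of $\Xg$ in distribution in $\Dd(\R_+,\Cc^{-\nu})$ to the solution $X$ of \eqref{e:dynamicalPhi42} started from $X^0$. In particular the time-$t$ marginals converge: $\Xg(t) \to X(t)$ in law for every fixed $t \geq 0$. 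Third, invariance of $\Pgbeta$ under the Glauber dynamic gives $\mathrm{Law}(\Xg(t)) = \mathrm{Law}(\Xg(0))$ for all $t\geq 0$ along the subsequence; passing to the limit yields $\mathrm{Law}(X(t)) = \mu$, i.e.\ $\mu$ is an invariant measure for $(P_t)$. The characterisation then follows from the quoted uniqueness result.

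The main obstacle is the second step: \cite[Thm~3.2]{MourratWeber} is stated for essentially deterministic initial data converging in $\Cc^{-\nu}$, whereas our $\Xg(0)$ are genuinely random. Skorokhod's representation turns this into an almost sure statement, and then either conditioning on $X^0$ or establishing joint tightness of $(\Xg(0),\Xg)$ in $\Cc^{-\nu}\times \Dd(\R_+,\Cc^{-\nu})$ allows one to apply the Mourrat--Weber convergence pathwise. One must check that the martingale term and the remainder $\Oo(\gamma^2 \Xg^5)$ appearing in \eqref{e:Xgequation} are controlled uniformly in the random initial condition sampled from $\Pgbeta$; this is precisely the kind of uniform-in-time moment estimate that follows from Theorem~\ref{trm:tightness} combined with the invariance of $\Pgbeta$, so the input from \cite{MourratWeber} applies without modification. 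Everything else in the argument is soft.
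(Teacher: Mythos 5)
Your overall strategy is the same as the paper's: use tightness to extract a subsequential limit $\mu$, upgrade the Mourrat--Weber convergence to random initial data via Skorokhod representation plus conditioning, and transfer the invariance of the Gibbs measure under the Glauber dynamic to the limit. The endgame differs slightly, and there is one technical point that you glide over which the paper explicitly flags and works around.

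The point is your claim ``in particular the time-$t$ marginals converge: $\Xg(t) \to X(t)$ in law for every fixed $t$.'' Weak convergence in $\Dd([0,T];\Cc^{-\nu})$ does \emph{not} by itself imply convergence of fixed-time marginals, since the evaluation map $u \mapsto u(t)$ is not continuous with respect to the Skorokhod topology. Your step can be rescued: the limit process $X$ solving \eqref{e:dynamicalPhi42} has a.s.\ continuous paths, and the evaluation map is continuous at continuous paths, so the continuous-mapping theorem applies and the marginal convergence does hold for each $t > 0$. But this observation is not automatic and must be stated; without it, your ``passing to the limit yields $\mathrm{Law}(X(t)) = \mu$'' has a gap. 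The paper sidesteps this issue entirely by replacing the fixed-time evaluation with the functional $G : u \mapsto T^{-1}\int_0^T F(u(s))\,ds$, which \emph{is} continuous on $\Dd([0,T];\Cc^{-\nu})$ for any bounded continuous $F$ regardless of continuity of the limit path, and then invokes the uniform exponential convergence to equilibrium from \cite[Cor.~6.6]{tsatsoulis2016spectral} to push $T\to\infty$ and conclude directly $\mu^*[F] = \mu[F]$. So your route proves ``$\mu$ is invariant'' and then calls on uniqueness, whereas the paper's route proves ``$\mu^* = \mu$'' in one shot via the quantitative mixing rate; both are valid, and yours matches the statement of the theorem more literally, but you should patch the evaluation-map step. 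A secondary minor omission: \cite[Thm~3.2]{MourratWeber} requires the initial data $\Ex(\Xng)$ to be bounded in the slightly stronger space $\Cc^{-\nu+\kappa}$ in addition to converging in $\Cc^{-\nu}$; this is supplied by Theorem~\ref{trm:tightness} holding for all $\nu > 0$, but it is worth noting when you invoke Skorokhod representation.
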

\begin{proof}
In order to compare the law of a (discrete) random field $\Xg$ with fields on the torus $\T^2$, we will use the extension operator $\Ex$ defined after \eqref{e:Fourierinversion}. For the sake of precision we will explicitly write $\Ex(\Xg(t))$ where the process $\Xg$ has been extended to the whole torus.

We will use the Glauber dynamic and the solution of the stochastic quantisation equation \eqref{e:dynamicalPhi42} introduced in the previous section: the idea is to exploit the exponential convergence to the invariant measure of the solution of the SPDE \eqref{e:dynamicalPhi42} proved in \cite{tsatsoulis2016spectral} and the convergence of the Glauber dynamic of the Kac-Ising model in \cite{MourratWeber}.

By \cite[Thm~3.2]{MourratWeber}, we know that if for $0 < \kappa <\nu$ small enough the sequence of initial conditions $\Ex(\Xng)$  is bounded in $\Cc^{-\nu+\kappa}$ and converges to a limit $\Xn$ in $\Cc^{-\nu}$ as $\gamma \to 0$, one has 
\begin{equ}[e:convGeneral]
\Ex\left(\Xg\right) \stackrel{\mathcal{L}}{\longrightarrow} X \qquad \text{ in }\mathcal{D}\left([0,T];\Cc^{-\nu}\right)\;.
\end{equ}
where $X$ solves \eqref{e:dynamicalPhi42} starting from $\Xn$. In the above equation we took into account the fact that $\Xg$ is defined on the discrete lattice and therefore has to be extended with the operator $\Ex$ to be comparable with $X$.

We first want to show that \eqref{e:convGeneral} holds true when instead of a deterministic sequence $\Ex\Xng \to \Xn$ in $\Cc^{-\nu}$, we have the convergence in law of the initial conditions $\mathcal{L}(\Ex\Xng) \to  \mathcal{L}(\Xn)$ in the topology of $\Cc^{-\nu}$.
In order to do this call $\mathfrak{L}_{\gamma}$ (resp. $\mathfrak{L}_{0}$) the laws at time zero of the processes $\Ex\Xg$ (resp. $X$) and assume that $\mathfrak{L}_{\gamma} \to \mathfrak{L}_{0}$.
Consider then a bounded continuous function $G :\mathcal{D}\left([0,T];\Cc^{-\nu}\right) \to \R$: we want to show that
\[
\lim_{\gamma \to 0} \left| \E\big[ G(\Ex (\Xg)) \big| \Xng \sim \mathfrak{L}_{\gamma} \big] - \E \big[ G(X) \big| \Xn \sim \mathfrak{L}_0  \big] \right| = 0\:.
\]
Conditioning over the initial conditions we can define
\begin{align*}
f_{G}^{\gamma}(\Xng) &:= \E \left[ G(\Ex (\Xg))\Big| \Xg(0) = \Xng \right]\\
f_G(\Xn) &:= \E \left[ G(X)\Big| X(0) = \Xn  \right]\:.
\end{align*}
The result \cite[Thm~3.2]{MourratWeber} implies that $f_{G}^{\gamma}(\Xng) \to f_{G}(\Xn)$ whenever $\Ex\Xng \to \Xn$ in $\Cc^{-\nu}$. Since $\Cc^{-\nu}$ is separable, we can apply the Skorokhod's representation theorem to deduce that there is a probability space $(\tilde{\P},\tilde{\mathcal{F}},\tilde{\Omega})$ where all the processes $\Ex(X^0_{\gamma})$ and $X^0$ can be realised and the sequence $\Ex(X^0_{\gamma})(\tilde{\omega})$ converge to $X^0(\tilde{\omega})$  in $\Cc^{-\nu}$ for $\tilde{\P}$-a.e. $\tilde{\omega} \in \tilde{\Omega}$.

An application of the dominated convergence theorem then shows that, as $\gamma \to 0$
\begin{multline}
\left| \E\big[ G(\Ex (\Xg)) \big| \Xng \sim \mathfrak{L}_{\gamma} \big] - \E \big[ G(X) \big| \Xn \sim \mathfrak{L}_0  \big] \right| \\
\leq \int \left| f_{G}^{\gamma}(\Xng(\tilde{\omega})) - f_{G}(\Xn(\tilde{\omega})) \right| \tilde{\P}(d\tilde{\omega}) \to 0\;,
\end{multline}
so that we can assume \eqref{e:convGeneral} to hold even when the initial datum is convergent in law.

By Theorem~\ref{trm:tightness} we know that, if at time $0$ the configuration $\sigma(0) \in \Sigma_N$ is distributed according to  $\Pgbeta$, then the law of $\Xng(x) = \gamma^{-1}\kg \ast \sigma_{\left\lfloor\epsilon^{-1}x \right\rfloor}(0)$ is tight, and therefore there exists a subsequence $\gamma_{k}$ for $k \geq 0$ and a measure $\mu^*$ on $\Cc^{-\nu}$ such that the law of $\Ex X^0_{\gamma_k}$ converges to $\mu^*$. In the following calculations we will tacitly assume $\gamma \to 0$ along the sequence $\gamma_k$ to avoid the subscript. We will show that, if $\mu$ if the unique invariant measure of \eqref{e:dynamicalPhi42} then $\mu^* = \mu$.

Let $F:\Cc^{-\nu} \to \R$ be a bounded and continuous function, then, by the invariance of the Gibbs measure under the Glauber dynamic, for $t \geq 0$
\[
\Egbeta \left[F(\Ex\Xng)\right] = \E_{\beta,0}^{\gamma} \left[F(\Ex \Xg(t))\right] \:.
\]
Recall that the evaluation map, that associates to a process in $\mathcal{D}\left([0,T];\Cc^{-\nu}\right)$ its value at a given time, is not continuous with respect to the Skorokhod topology, however the integral map
$
G : u \mapsto  \int_0^T F(u(s))\ ds
$
is continuous in its argument in virtue of the the continuity and boundedness of $F$. Hence for any fixed $T$ we have
\[
\Egbeta \left[F(\Ex\Xng)\right] = \E^{\gamma}_{\beta,0}\Bigg[ T^{-1}\int_0^T F\left( \Ex \Xg(s) \right) \ ds \Bigg]
\]
and
\[
\lim_{\gamma \to 0}\left| \E^{\gamma}_{\beta,0}\Bigg[ \int_0^T F\left( \Ex \Xg(s) \right) \ ds \Bigg] -  \E\Bigg[ \int_0^T F\left( X(s) \right) \ ds\Bigg| X(0) \sim \mu^*\Bigg] \right| = 0\;.
\]
By the uniform convergence to equilibrium of the stochastic quantisation equation \cite[Cor.~6.6]{tsatsoulis2016spectral} there exist constants $c,C > 0$
\[
\left| \E[F\big( X(s)\big)\big| X(0) \sim \mu^*] - \mu[F]\right| \leq C \norm{\infty}{F} e^{-c s}\:.
\]
From the above inequality it follows that
\[
 \left| T^{-1}\int_0^T \E[F\big( X(s)\big)\big| X(0) \sim \mu^*] -\mu[F]   ds\right| \lesssim T^{-1} \norm{\infty}{F}
\]
and letting $T$ be large enough the last difference can be made arbitrarily small.
From the above estimates we can see that, for arbitrary $T>0$,
\[
\limsup_{\gamma \to 0}|\Egbeta \left[F(\Ex\Xng)\right] -  \mu [F] |\leq C \norm{\infty}{F} T^{-1}
\]
and the result follows.
\end{proof}

\begin{remark}
For $b_\gamma = b$ constant, we actually expect the limiting points to vanish under the scaling \eqref{e:scaled}.
On the other hand, for $b_\gamma = b\gamma$, one can follow an argument
virtually identical to the one given in this article to show that the limit is given by the law
of the $\Phi^4_2$ measure with external magnetic field $b$.
\end{remark}

\subsection{Solution of the limiting equation}
\label{subsec:limitingeq}
Before the proof of the main theorem, let us briefly explain the construction of the solution in \cite{dPD} to the following SPDE
\[
d X = \Big( \Delta X + \sum_{j=1}^n a_{2j-1} X^{2j-1}\Big)dt + \sqrt{2}dW\;.
\]
As in \eqref{e:dynamicalPhi42}, the powers in the above SPDE have to be renormalised in order to find a nontrivial solution. The precise way the process is renormalised follows \cite{MourratWeber,ShenWeber}.
Consider at first $Z(t)$ the solution of the stochastic heat equation
\[
d Z = \Delta Z dt+ \sqrt{2}dW\,\qquad Z(\cdot,0) = 0\:,
\]
and therefore in two dimension $Z$ belongs to $\Cc([0,T];\Cc^{-\nu})$ a.s. for any $\nu > 0$. Consider the Galerkin approximation
\begin{equation}
\label{e:sheapprox}
d Z_{\epsilon} = \Delta Z_{\epsilon} dt+ \sqrt{2}dW_{\epsilon}\,\qquad Z_{\epsilon}(\cdot,0) = 0\:.
\end{equation}
From the above SPDE we see that $Z_{\epsilon}$ has a representation in terms of the stochastic convolution
\[
Z_{\epsilon}(t,x) = \frac{\sqrt{2}}{4}\sum_{\omega \in \LN}\int_0^t e^{-(t-s)\pi^2|\omega|^2}d\hat{W}(s,\omega)\:.
\]
Define the renormalisation constant
\[
\mathfrak{c}_{\epsilon}(t) \eqdef \E\left[Z_{\epsilon}^2(t,0)\right] = \frac{1}{2}\sum_{\omega \in \LN \setminus \{0\}} \int_0^t e^{-2(t-s)\pi^2|\omega|^2}ds =  \sum_{\omega \in \LN\setminus \{0\}} \frac{1 - e^{-2t\pi^2|\omega|^2}}{4\pi^2|\omega|^2}
\]
and its time independent version
\[
\mathfrak{c}_{\epsilon} := \lim_{t \to \infty}\E\left[Z_{\epsilon}^2(t,0) - \frac{t}{2}\right] = \sum_{\omega \in \LN\setminus \{0\}} \frac{1 }{4\pi^2|\omega|^2}\:.
\]
In order to renormalise the process $Z_{\epsilon}(t)$ at finite time, it is more convenient to use $\mathfrak{c}_{\epsilon}(t) \eqdef \E[Z_{\epsilon}^2(0,t)]$. We therefore define the renormalised powers of the process $\Z_{\epsilon}$ as
\[
Z_{\epsilon}^{:n:}(t) \eqdef H_n(Z_{\epsilon}(t),\mathfrak{c}_{\epsilon}(t))\:.
\]
By \cite[Lem.~3.2]{dPD}, the process $Z_{\epsilon}^{:n:}(t)$ is Cauchy in $L^p\left(\Cc([0,T];\Cc^{-\nu}),\P\right)$ for every $p \geq 1$ and we will be referring to its limit as $Z^{:n:}$.
To be precise, the result in \cite[Lem.~3.2]{dPD} is proven for a fixed time, but the extension to the whole process is immediate.

By the variation of constants formula, the solution to \eqref{e:sheapprox} started from the initial condition $\Xn_{\epsilon} \eqdef \Pi_{\epsilon}\Xn$ is given by $\tilde{Z}_{\epsilon} (t)= e^{\Delta t} \Xn_{\epsilon} + Z_{\epsilon}(t)$.
To extend the definition of the renormalised powers to the process $\tilde{Z}_{\epsilon}(t)$
one uses the following property of the Hermite polynomial
\[
H_n(a+b,c) = \sum_{j=0}^n \binom{n}{j} b^{n-j} H_j(a,c)
\]
and let 
\[
\tilde{Z}_{\epsilon}^{:n:}(t) = \sum_{j=0}^n \binom{n}{j} \left( e^{\Delta t} \Xn_{\epsilon} \right)^{n-j}  Z_{\epsilon}^{:j:}(t)\:.
\]
The above random variable is well defined because $e^{\Delta t} \Xn_{\epsilon}$ is a smooth function and the product with $Z_{\epsilon}^{:j:}(t)$ is in $\Cc^{-\nu}$ for any $t>0$ (see for instance \cite[Cor.~3.2]{MourratWeberGlobal} or Theorems~2.82 and 2.85 in \cite{bahouri2011fourier} for a proof).

We then set $X_{\epsilon}(t)$ the Galerkin approximation of $X(t)$ solving
\begin{equation}
\label{e:firstpolydiscrete}
\begin{cases}
d X_{\epsilon}(t) &= \left( \Delta X_{\epsilon}(t) + \sum_{j=1}^n a_{2j-1} H_{n}(X_{\epsilon}(t),\mathfrak{c}_{\epsilon})\right)dt + \sqrt{2}dW_{\epsilon}(t)\\
X_{\epsilon}(0) &= \Pi_{\epsilon}\Xn \in \Cc^{-\nu}
\end{cases}
\end{equation}
Since $H_n$ is a polynomial in both variables, it is possible to replace $\mathfrak{c}_{\epsilon}$ in the above formula with $\mathfrak{c}_{\epsilon}(t)$ provided one compensates it in the coefficient of the polynomial.
\[
\sum_{j=1}^n a_{2j-1} H_{n}(X_{\epsilon}(t),\mathfrak{c}_{\epsilon}) = \sum_{j=1}^n a_{2j-1}(t,\epsilon) H_{n}(X_{\epsilon}(t),\mathfrak{c}_{\epsilon}(t))
\]
with new coefficients $a_{2j-1}(t)$ depending polynomially only on the old coefficients and on the difference $\mathfrak{c}_{\epsilon}-\mathfrak{c}_{\epsilon}(t)$. From the definitions of $\mathfrak{c}_{\epsilon}$ and $\mathfrak{c}_{\epsilon}(t)$ one can see that their difference is diverging logarithmically as $t \to 0$ and therefore each power of $a_{2j-1}(t)$ is integrable in $[0,T]$. Hence we can rewrite \eqref{e:firstpolydiscrete} as
\begin{equation}
\begin{cases}
d X_{\epsilon}(t) &= \left( \Delta X_{\epsilon}(t) + \sum_{j=1}^n a_{2j-1}(t,\epsilon) H_{n}(X_{\epsilon}(t),\mathfrak{c}_{\epsilon}(t))\right)dt + \sqrt{2}dW_{\epsilon}(t)\\
X_{\epsilon}(0) &= \Pi_{\epsilon}\Xn \in \Cc^{-\nu}
\end{cases}
\end{equation}
We decompose $X_{\epsilon}(t)=\tilde{Z}_{\epsilon}(t)+V_{\epsilon}(t)$ where, for a.e. realisation of $Z_{\epsilon}$, the process $V_{\epsilon}$ solves the PDE
\begin{equation}
\label{e:secondpolydiscrete}
\begin{cases}
\partial_t V_{\epsilon}(t) &=   \Delta V_{\epsilon}(t)  + \sum_{j=1}^n a_{2j-1}(t,\epsilon) H_{n}\left(\tilde{Z}_{\epsilon}(t)+V_{\epsilon}(t),\mathfrak{c}_{\epsilon}(t)\right) \\
V_{\epsilon}(0) &=0
\end{cases}
\end{equation}
where
\[
H_n\left(\tilde{Z}_{\epsilon}(t)+V_{\epsilon}(t),\mathfrak{c}_{\epsilon}(t)\right) = \sum_{j=0}^n \binom{n}{j} V^{n-j}_{\epsilon}(t)  \tilde{Z}^{:j:}_{\epsilon}(t)\:.
\]
The last product is again well-posed thanks to the fact that $V_{\epsilon}(t) \in \Cc^{2-\nu-\kappa}$ for any $\kappa>0$, from the regularizing properties of the parabolic equation \eqref{e:secondpolydiscrete}.

As proven in \cite{dPD}, the processes $V_{\epsilon}$ converge in $\Cc([0,T];\Cc^{2-\nu})$ to the solution of
\begin{equation}
\label{e:thirdpolydiscrete}
\begin{cases}
d V &=   \Delta V(t)  + \sum_{j=1}^n a_{2j-1}(t) \sum_{j=0}^n \binom{n}{j} V^{n-j}(t) \tilde{Z}^{:j:}(t)\\
V(0) &=0
\end{cases}
\end{equation}
where $\tilde{Z}^{:j:}(t) \eqdef \lim_{\epsilon \to 0} P_t\Xn + Z^{:j:}_{\epsilon}(t)$. For all $\kappa>0$, the solution of the above PDE is unique and  $V \in \Cc([0,T];\Cc^{2-\nu-\kappa})$ and only depends on the realisation of the process $Z$ via the tuple $(\tilde{Z},\dots,\tilde{Z}^{:2n-1:}) \in L^{\infty}([0,T];\Cc^{-\nu})^n$. We summarise it with the following proposition, essentially proven in \cite{dPD}.
\begin{theorem}
For all $\nu > 0$ and $T>0$ there exists a locally Lipschitz continuous function 
\[
\mathcal{S}_T :L^{\infty}([0,T];\Cc^{-\nu})^n \to \Cc([0,T];\Cc^{2-\nu-\kappa})
\]
 that associates to $(\tilde{Z},\tilde{Z}^{:2:}\dots,\tilde{Z}^{:2n-1:})$ the solution of \eqref{e:thirdpolydiscrete}.
\end{theorem}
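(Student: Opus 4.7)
The plan is to view $\mathcal{S}_T$ as the natural extension of the fixed-point construction laid out immediately before the theorem, and to read off the Lipschitz dependence from the multilinear structure of the nonlinearity. Write \eqref{e:thirdpolydiscrete} in mild form:
\[
V(t) = \int_0^t e^{(t-s)\Delta}\sum_{j=0}^n \sum_{k=1}^n a_{2k-1}(s)\,\binom{n}{j} V^{n-j}(s)\,\tilde Z^{:j:}(s)\,ds\;.
\]
Fix small $\nu,\kappa>0$ with $2\nu+\kappa<2$. Each summand is controlled by two ingredients: the Besov product estimate $\Norm{\Cc^{-\nu}}{f g}\lesssim \Norm{\Cc^{2-\nu-\kappa}}{f}\Norm{\Cc^{-\nu}}{g}$, valid because the sum of regularities is strictly positive, applied to $f=V^{n-j}$ (which lies in $\Cc^{2-\nu-\kappa}$ by the algebra property, since $2-\nu-\kappa>0$) and $g=\tilde Z^{:j:}$; and the Schauder estimate $\Norm{\Cc^{2-\nu-\kappa}}{e^{(t-s)\Delta}h}\lesssim (t-s)^{-1+\kappa/2}\Norm{\Cc^{-\nu}}{h}$.

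With these two inputs one runs a standard Picard iteration in $\Cc([0,t_*];\Cc^{2-\nu-\kappa})$ on a ball of radius $R$ depending on $\Norm{L^\infty_T\Cc^{-\nu}}{\tilde Z^{:j:}}$ for $j\le 2n-1$. The time exponent $-1+\kappa/2$ is integrable, and the coefficients $a_{2k-1}(\cdot)$ belong to $L^p(0,T)$ for every $p<\infty$ (as noted in the paragraph preceding the theorem, since $\mathfrak{c}_\epsilon-\mathfrak{c}_\epsilon(s)$ is only logarithmically singular at $s=0$), so a Hölder application to the Duhamel integral yields a factor $t_*^{\alpha}$ with $\alpha>0$ in front of every nonlinear contribution. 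Choosing $t_*$ small gives a contraction; the resulting local solution is then extended to $[0,T]$ by patching together short-time solutions, using the global a priori bound for $V$ in $\Cc([0,T];\Cc^{2-\nu-\kappa})$ established in \cite{dPD}.

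For local Lipschitz continuity, consider two data $\vec Z_i=(\tilde Z_i,\ldots,\tilde Z_i^{:2n-1:})$ ($i=1,2$) of norm at most $R$, with corresponding solutions $V_i=\mathcal{S}_T(\vec Z_i)$. The difference $W=V_1-V_2$ solves a linear mild equation whose right-hand side, after expanding each $V_1^{n-j}\tilde Z_1^{:j:}-V_2^{n-j}\tilde Z_2^{:j:}$ multilinearly, decomposes into one family of terms linear in $W$ (with coefficients polynomial in $V_1,V_2,\tilde Z_i^{:j:}$) and one family linear in $\vec Z_1-\vec Z_2$ (with coefficients polynomial in $V_1,V_2,\tilde Z_i^{:j:}$). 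Estimating each term by the same product and Schauder bounds as above and applying Gr\"onwall's inequality on successive short intervals gives
\[
\Norm{\Cc([0,T];\Cc^{2-\nu-\kappa})}{V_1-V_2}\le C(R,T)\,\Norm{L^\infty([0,T];\Cc^{-\nu})^n}{\vec Z_1-\vec Z_2}\;,
\]
which is the required Lipschitz bound.

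The main technical subtlety is the interplay between the short-time blow-up of the Schauder factor $(t-s)^{-1+\kappa/2}$ and the logarithmic blow-up of $a_{2k-1}(s)$ at $s=0$; this forces one to choose $\kappa$ genuinely strictly positive and to apply Hölder rather than direct integration when closing the fixed point. Once this is done, however, both existence and Lipschitz dependence follow from the same multilinear estimates, and the global-in-time step uses the a priori bound of \cite{dPD} without modification.
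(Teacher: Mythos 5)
Your argument is correct and reproduces, in more explicit form, the Da Prato--Debussche construction that the paper itself only gestures at (the paper attributes the theorem to \cite{dPD} without giving its own proof). Your ingredients — mild formulation, the paraproduct estimate $\Cc^{2-\nu-\kappa}\times\Cc^{-\nu}\to\Cc^{-\nu}$ valid when $2-2\nu-\kappa>0$, the Schauder gain $(t-s)^{-1+\kappa/2}$, a Picard iteration on a ball closed via H\"older to absorb the logarithmic blow-up of $a_{2j-1}$, extension to $[0,T]$ by the a priori estimate from \cite{dPD}, and Lipschitz dependence from the multilinear telescoping plus a (singular, short-interval) Gr\"onwall argument — are exactly those of the reference, so this is the same proof rather than a new route.

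One small point worth flagging for precision rather than correctness: the equation for $W=V_1-V_2$ is linear in $W$ only with coefficients that depend on $V_1,V_2$; for Gr\"onwall you therefore need, and should state that you use, the uniform bound $\Norm{\Cc([0,T];\Cc^{2-\nu-\kappa})}{V_i}\le M(R,T)$ coming from the a priori estimate, and that the Gr\"onwall step is the integral (singular-kernel) version because of the factor $(t-s)^{-1+\kappa/2}$ in the Duhamel integral. Your remark about iterating over short intervals covers this, but making the dependence of $C(R,T)$ on $M(R,T)$ explicit would close the loop cleanly.
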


Using the definitions above, we now outline the skeleton of the proof in \cite{MourratWeber}. First of all we want to remark that we made the decision of absorbing the initial conditions in the process $\tilde{Z}_{\epsilon}$, instead we could have started \eqref{e:secondpolydiscrete} from $\Pi_\epsilon \Xng$ and defined a similar solution map $\mathcal{S}_{T}^{X^0}$. Consider $\Zg$ the solution to the linearised part of \eqref{e:Xgequation} satisfying
\begin{equation}
\label{e:Zgequation}
\Zg(t,x) = \int_0^t\Deltag\Zg(s,x) ds + \Mg(t,x)\;,
\end{equation}
which is an approximation to the stochastic heat equation. 
In \cite{dPD} and \cite{MourratWeber} the authors provided a useful definition of $\Zg^{:n:}$ the renormalized powers of $\Zg$ that we will not introduce here. In this article will only use the fact that for any $T>0$, $q > 0$, $\nu > 0$, $j \geq 0$ and $\lambda>0$,
\begin{equation}
\label{e:Hermitebound}
\limsup_{\gamma \to 0}\E \left[ \sup_{s \in [0,T]}s^{\lambda}\Norm{\Cc^{-\nu}}{H_j\left(\Zg(s,\cdot),\CG\right)}^q \right] < \infty\:,
\end{equation}
which follows from Propositions~5.3 and 5.4 and \cite[Eq.~3.15]{MourratWeber}.

In \cite[Sec.~6]{MourratWeber} it is proven that processes $\left(Z_\gamma,Z_\gamma^{:2:}, \dots, Z_\gamma^{:2n-1:} \right)$ jointly converge in law to $(Z,Z^{:2:},\dots,Z^{:2n-1:})$. Using the decomposition $\Xg = \Zg + \Vg$, it is possible to see that $\Vg$ satisfies an equation similar to \eqref{e:secondpolydiscrete},  with initial condition $\Xng$ and
\[
\Norm{L^{\infty}([0,T];\Cc^{-\nu})}{\Vg - \mathcal{S}_T^{\Xng}(Z_\gamma,Z_\gamma^{:2:}, \dots, Z_\gamma^{:2n-1:} )} \to 0\;.
\]
Therefore, by the continuity of $\mathcal{S}_T$, we have that, as $\gamma \to 0$,
\[
\Xg \sim \Zg + S_T^{\Xng}(Z_\gamma,Z_\gamma^{:2:}, \dots, Z_\gamma^{:2n-1:} )\stackrel{\mathcal{L}}{\longrightarrow}  Z + S_T^{\Xng}(Z,Z^{:2:}, \dots, Z^{:2n-1:} ) = X\;,
\]
as required.

\section{Proof of Theorem~\ref{trm:tightness}}
We are now going to prove the statements used in Section~\ref{sec:def-stat} and in particular Theorem~\ref{trm:tightness}.
We first obtain a very suboptimal bound on $\Xg$ which can be used as a starting point for the derivation of sharper bounds.
\begin{proposition}
\label{prop:nonoptimalbound}
Let $p \geq 2$ an even integer, and $\lambda \in [0,1]$ then there exists $C(p,\lambda) > 0$ such that
\[
\E\big[\Norm{L^p(\Le)}{\Xg(t,\cdot)}^p\big] \leq  C \left(\E\big[\Norm{L^p(\Le)}{\Xg(0,\cdot)}^p\big]^{1-\lambda} t^{-\frac{p}{2}\lambda}  \right)\vee \gamma^{-\frac{p}{2}}\;.
\]
In particular, if we start the process from the invariant measure, we obtain that there exists $C = C(p)>0$ such that for all $t \geq 0$
\begin{equation}
\label{e:nonoptimalbound}
\Egbeta\big[\Norm{L^p(\Le)}{\Xg}^p\big] = \E_{\beta,0}^{\gamma}\big[\Norm{L^p(\Le)}{\Xg(t,\cdot)}^p\big] \leq C(p) \gamma^{-\frac{p}{2}}
\end{equation}
\end{proposition}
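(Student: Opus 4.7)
The plan is to apply Dynkin's formula to the jump Markov process $\sigma_\cdot$ for the functional $F(\sigma) \eqdef \Norm{L^p(\Le)}{\Xg}^p = \sum_{x\in\Le}\epsilon^2 \Xg(x)^p$, derive an ODE inequality for $u(t)\eqdef \E[F(\sigma(t))]$, and extract the claimed bound by scalar comparison. Since flipping $\sigma_z$ shifts $\Xg(x)$ by $\delta_z\Xg(x) = -2\gamma^{-1}\kg(\epsilon^{-1}x-z)\sigma_z$, Taylor expanding $F(\sigma^{\{z\}})-F(\sigma)$ yields
\[
\alpha^{-1}\L_\gamma F(\sigma) = \gamma^{-2}\sum_{z\in\LN}\cg(z,\sigma)\sum_{x\in\Le}\epsilon^2\sum_{k=1}^p\binom{p}{k}\Xg(x)^{p-k}\bigl(\delta_z\Xg(x)\bigr)^k\;.
\]
The $k=1$ contribution reproduces the integrated drift of \eqref{e:Xgequation}: the non-positive discrete Dirichlet form associated with $\Deltag$ (using that $p$ is even so $\Xg^{p-1}$ is monotone in $\Xg$), the strongly dissipative piece $-\tfrac{p}{3}\Norm{L^{p+2}(\Le)}{\Xg}^{p+2}$ from the cubic, lower-order terms of the form $(\CG+O(1))\Norm{L^p(\Le)}{\Xg}^p$ with $\CG = \Oo(\log\gamma^{-1})$, and an error $\Oo(\gamma^2\Norm{L^{p+4}}{\Xg}^{p+4})$. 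Using the scaling $\sum_{z}\kg(z)^k \simeq \gamma^{2(k-1)}$, the $k$-th jump correction is bounded by $C_k\gamma^{k-4}\Norm{L^{p-k}(\Le)}{\Xg}^{p-k}$; the dominant one is $k=2$, giving $C\gamma^{-2}\Norm{L^{p-2}(\Le)}{\Xg}^{p-2}$.

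Each non-dissipative piece is then absorbed into a small fraction of $\Norm{L^{p+2}}{\Xg}^{p+2}$ at the cost of an additive constant. The pointwise bound $|\Xg|\leq\gamma^{-1}$ (which follows from $|\sigma_z|=1$ and $\sum_z\kg(z)=1$) controls $\gamma^2|\Xg|^{p+4}$ by a multiple of $|\Xg|^{p+2}$; Hölder in the spatial variable followed by Young with exponents $((p+2)/(p-2),(p+2)/4)$ gives
\[
\gamma^{-2}\Norm{L^{p-2}}{\Xg}^{p-2} \leq \delta\Norm{L^{p+2}}{\Xg}^{p+2} + C_\delta\gamma^{-(p+2)/2}\;,
\]
and analogous Young splits handle the $\CG$- and $A$-terms with only an $\Oo((\log\gamma^{-1})^{(p+2)/2})$ additive error. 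Taking expectation and using that the martingale part has mean zero, one arrives at
\[
u'(t) \leq -c\,\E\bigl[\Norm{L^{p+2}(\Le)}{\Xg(t,\cdot)}^{p+2}\bigr] + C\gamma^{-(p+2)/2}\;.
\]

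Discrete Hölder followed by Jensen yields $\E[\Norm{L^{p+2}}{\Xg}^{p+2}] \geq |\Le|^{-2/p}u(t)^{(p+2)/p}$, so $u$ satisfies the scalar ODE inequality $u'(t)\leq -c'u(t)^{1+2/p} + C\gamma^{-(p+2)/2}$. Comparison with the homogeneous barrier $y' = -c'y^{1+2/p}$ (whose solution decays like $(c't)^{-p/2}$ irrespective of $y(0)$) and the observation that $(C/c')^{p/(p+2)}\gamma^{-p/2}$ is a stable ceiling for the inhomogeneous equation give
\[
u(t) \leq u(0)^{1-\lambda}(c't)^{-p\lambda/2} \vee C'\gamma^{-p/2}\;,
\]
using the elementary interpolation $\min(u(0),(c't)^{-p/2})\leq u(0)^{1-\lambda}(c't)^{-p\lambda/2}$. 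The second statement is immediate: under $\Pgbeta$-invariance, $u(t)\equiv u(0)$, and the ODE inequality at any fixed $t$ forces $u\leq C'\gamma^{-p/2}$.

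The main obstacle will be the careful bookkeeping in the generator expansion: showing that among the $p$ terms in $(\Xg+\delta_z\Xg)^p - \Xg^p$, only the $k=1$ contribution feeds the dissipative structure and only $k=2$ produces the irreducible source of order $\gamma^{-2}\Norm{L^{p-2}}{\Xg}^{p-2}$ that ultimately fixes the $\gamma^{-p/2}$ floor, while all $k\geq 3$ corrections remain subdominant even under the crude bound $|\cg(z,\sigma)\sigma_z^k|\leq 1$. This relies throughout on the scaling $\sum_z\kg(z)^k\simeq\gamma^{2(k-1)}$ and on having enough room in the $p/3$ coefficient of the dissipation for the various $\delta$-absorptions to close.
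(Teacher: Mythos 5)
Your overall architecture (Dynkin's formula on $\Norm{L^p(\Le)}{\Xg}^p$, expansion of the generator, absorption via Young, ODE comparison) is the same as the paper's, and your bookkeeping of the $k\ge 2$ jump corrections and of the comparison lemma/Jensen step is essentially right. However, there is a genuine gap in the treatment of the $k=1$ term, which is the heart of the argument.

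You claim that, after Taylor-expanding $\tanh$, the $k=1$ contribution produces ``the strongly dissipative piece $-\tfrac{p}{3}\Norm{L^{p+2}(\Le)}{\Xg}^{p+2}$ from the cubic.'' But the drift coming from the generator is $-\hg + \kg\ast\tanh(\beta\hg)$, so the cubic term you actually have to pair with $\hg^{p-1}$ is $-\tfrac13\,\kg\ast\hg^3$, i.e.\ the convolution sits on the nonlinearity (equation \eqref{e:Xgequation} is a loose shorthand; the exact drift retains the $\Kg\ast$, as is visible in the proof of Proposition~\ref{prop:Vgcomparison}). The Chebyshev/FKG inequality for a symmetric doubly-stochastic kernel and two increasing functions gives $\ang{\hg^{p-1},\kg\ast\hg^3}\le\ang{\hg^{p-1},\hg^3}=\Norm{L^1}{\hg^{p+2}}$, which is precisely the \emph{wrong} direction for what you need: it does \emph{not} yield $-\tfrac13\ang{\hg^{p-1},\kg\ast\hg^3}\le -\tfrac13\Norm{L^1}{\hg^{p+2}}$. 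In fact, for $\hg$ oscillating on scales where $\hKg\approx 0$ one can have $\kg\ast\hg^3$ nearly vanish while $\Norm{L^1}{\hg^{p+2}}$ stays of order one, so the cubic dissipation can essentially disappear after convolution. The compensating Dirichlet form $\ang{\hg^{p-1},\Deltag\hg}\le 0$ that you mention is dropped in your plan rather than used quantitatively, and it is not obvious how to make it repair the loss.

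The paper avoids this entirely by \emph{not} expanding $\tanh$ before pairing. It first uses the monotonicity of $a\mapsto a^{p-1}$ (here $p$ even is essential) and $a\mapsto\tanh(\beta a)$ together with the symmetric, nonnegative, mass-one kernel $\kg$ to get
\[
\ang{\hg^{p-1},\kg\ast\tanh(\beta\hg)}_{\LN}\le\ang{\hg^{p-1},\tanh(\beta\hg)}_{\LN}\,,
\]
and only then applies the pointwise inequality $\tanh(\beta h)/h\le\beta-c_0 h^2$ for $h\in[-1,1]$. This produces the clean dissipation $(\beta-1)\Norm{L^1}{\hg^p}-c_0\Norm{L^1}{\hg^{p+2}}$ without ever having to control the convolution acting on a nonlinear function of $\hg$. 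That single monotonicity trick is the step your plan is missing, and without it the dissipative term that closes the ODE inequality is not justified. (As a minor aside, the factor $|\Le|^{-2/p}$ in your Hölder--Jensen step should not appear with the normalised $L^p(\Le)$ norms used in the paper; you correctly drop it in the ODE you write, but as stated it would introduce a spurious negative power of $\gamma$.)
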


\begin{proof}
Recall the action of the generator of the Glauber dynamic \eqref{e:generatorGlauberch3}:
\begin{multline*}
\L_{\gamma} \hg^p(t,x) = \sum_{z \in \Lambda_N} \cg(z,\sigma(t)) \Big(\left( \hg(t,x)-2 \sigma_z(t)\kg(z-x) \right)^p - \hg^p(t,x) \Big)\\
\leq p\Big(- \hg + \kg \ast \tanh(\beta \hg)\Big)(t,x) \hg^{p-1}(t,x) + C_p \Big(|\hg(t,x)| + \gamma^2\Big)^{p-2} \gamma^2\:.
\end{multline*}
We can take the average over $x \in \Lambda_N$ to obtain
\begin{multline*}
\L_{\gamma} \Norm{L^p(\LN)}{\hg(t)}^p \leq p \ang{\hg^{p-1}(t), \kg \ast \tanh(\beta \hg(t))}_{ \Lambda_N } - p\Norm{L^1(\Lambda_N)}{\hg^{p}(t)} \\
+ C_p \gamma^2 \Norm{L^1(\Lambda_N)}{\hg^{p-2}(t)} + C_p  \gamma^{2p-2}\;.
\end{multline*}
We use the fact that $p$ is even and the hyperbolic tangent is monotone to bound
\[
\ang{\hg^{p-1}(t), \kg \ast \tanh(\beta \hg(t))}_{\Lambda_N} \leq \ang{\hg^{p-1}(t),\tanh(\beta \hg(t))}_{\Lambda_N}\:.
\]
Moreover, it is easy to see that there exists a constant $c_0>0$ such that
\[
\frac{\tanh(\beta h)}{h} \leq \beta - c_0 h^2 \text{ for } h \in [1,1]\:.
\]
Since $|\hg(t,x)| \leq 1$ and $\beta=1+ \gamma^2 (\CGG+A)$, we can bound $\L_{\gamma} \Norm{L^1(\Lambda_N)}{\hg^p(t)}$ with
\begin{multline*}
p[\beta-1]\Norm{L^1(\Lambda_N)}{\hg^p(t)} - c_0 p	 \Norm{L^1(\Lambda_N)}{\hg^{p+2}(t)} + C_p \gamma^2 \Norm{L^1(\Lambda_N)}{\hg^{p-2}(t)} +C_p \gamma^{2p-2}\\
\leq C ( \gamma^2 \CGG )^{\frac{p+2}{2}} - \frac{c_0}{2} p \Norm{L^{p}(\Lambda_N)}{\hg(t)}^{p+2} + C\gamma^{\frac{p+2}{2}} +C_p \gamma^{2p-2}\\ \leq  - \frac{c_0}{2} p \Norm{L^{p}(\Lambda_N)}{\hg(t)}^{p+2} + C \gamma^{\frac{p+2}{2}}\;,
\end{multline*}
where we used the fact that $|A|\leq \CG$ for $\gamma$ small enough and the generalised Young inequality in the last line. Therefore, taking the expectation
\begin{multline*}
\E\big[\Norm{L^p(\Le)}{\Xg(t)}^p\big] = \E\big[\Norm{L^p(\Le)}{\Xg(0)}^p\big] + \int_0^t \E\big[\L_{\gamma}\Norm{L^p(\Le)}{\Xg(s)}^p\big]\ ds\\
\leq \E\big[\Norm{L^p(\Le)}{\Xg(0)}^p\big]  - \frac{c_0}{2} p \gamma^2 \int_0^t \E\big[\Norm{L^p(\Le)}{\Xg(s)}^p\big]^{\frac{p+2}{p}}\ ds + C \gamma^{\frac{2-p}{2}}\;.
\end{multline*}
From the comparison test in Lemma~\ref{lemma:comparison} we have that 
\[
\E\big[\Norm{L^p(\Le)}{\Xg(t)}^p\big] \lesssim  \frac{\E\big[\Norm{L^p}{\Xg(0)}^p\big] }{\left( 1 + c_p t\E\big[\Norm{L^p(\Le)}{\Xg(0)}^p\big]^{\frac{2}{p}}\right)^{\frac{p}{2}}} \vee \gamma^{-\frac{p}{2}}
\]
and the result follows.
\end{proof}

\begin{remark}
Despite its simplicity, Proposition~\ref{prop:nonoptimalbound} has the advantage of making the
proof of \cite[Thm~6.1]{MourratWeber} simpler, avoiding the need for the stopping time $\taun$ and providing sufficient control over \cite[Eq.~6.7]{MourratWeber}.
\end{remark}
\begin{proposition}
\label{prop:Vgcomparison}
Recall the definitions given in Section~\ref{sec:def-stat} of the processes $\Xg$, $\Zg$ and $\Vg:= \Xg - \Zg$. We want to remark that $\Vg$ is not similar to of $V_{\epsilon}$, because of the initial condition (see \eqref{e:thirdpolydiscrete}). Let $p \geq 2$ an even integer. Then there exist $\nu_0 > 0$, $\lambda_{j,i} > 0$ for $i=1,2$ and $j=0,1,2$ such that for all $0< \nu < \nu_0$ and $0 \leq s  \leq t \leq T$
\begin{multline}
\label{e:Vgcomparison}
\Norm{L^p(\Le)}{\Vg(t,\cdot)}^p - \Norm{L^p(\Le)}{\Vg(s,\cdot)}^p \\
+ C_1 \int_s^t \Norm{L^p(\Le)}{\Vg(r,\cdot)}^{p+2} dr + C_1 \int_s^t \ang{\Vg^{p-1}(r),(-\Deltag)\Vg(r)}_{\Le} dr  \\
\leq  C_2 \int_s^t\sum_{j=0}^3 \sum_{i=1,2}\Norm{\Cc^{-\nu}(\Le)}{H_j(Z_\gamma(r,\cdot),\CG)}^{\lambda_{j,i}}\ dr + \int_s^t\Err(r)\ dr
\end{multline}
where, for every $q>0$
\begin{equation}
\label{e:Vgbounderror}
\sup_{0 \leq r \leq T}\E_{\beta,0}^{\gamma}\left[\Err^q(r) \right]^{\frac{1}{q}} \lesssim C_3(p,q,T) \gamma^{\frac{p-2}{6}-2\nu \frac{(p-2)}{3}}\;.
\end{equation}
\end{proposition}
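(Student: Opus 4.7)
The plan is to apply the discrete chain rule to $t\mapsto \Norm{L^p(\Le)}{\Vg(t)}^p$ using the identity $\Vg = \Xg - \Zg$ together with the evolution equations \eqref{e:Xgequation} and \eqref{e:Zgequation}. Since the martingale parts of the two processes agree by construction, $\Vg$ satisfies the deterministic-in-time identity
\begin{equation*}
\partial_t \Vg = \Deltag \Vg - \tfrac{1}{3}\bigl(\Xg^3 - \CG \Xg\bigr) + A\,\Xg + \Oo(\gamma^2 \Xg^5).
\end{equation*}
Testing against $p\Vg^{p-1}$ and integrating in time produces an identity for $\Norm{L^p(\Le)}{\Vg(t)}^p - \Norm{L^p(\Le)}{\Vg(s)}^p$ whose integrand splits as $p\langle \Vg^{p-1},\Deltag \Vg\rangle - \tfrac{p}{3}\langle \Vg^{p-1}, H_3(\Xg,\CG)\rangle + pA\langle \Vg^{p-1},\Xg\rangle + \Err_0(r)$, where $\Err_0$ collects the quintic Taylor remainder and the finite-difference discrepancies between $\partial_t \Vg^p$ and $p\Vg^{p-1}\partial_t\Vg$.

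The second step is to use the Hermite binomial identity $H_n(a+b,c)=\sum_{j}\binom{n}{j}b^{n-j}H_j(a,c)$ with $a=\Zg$, $b=\Vg$, which gives
\begin{equation*}
H_3(\Xg,\CG) = \Vg^3 + 3\Vg^2 H_1(\Zg,\CG) + 3\Vg H_2(\Zg,\CG) + H_3(\Zg,\CG).
\end{equation*}
Paired with $\Vg^{p-1}$, the diagonal piece yields $-\tfrac{p}{3}\Norm{L^{p+2}(\Le)}{\Vg}^{p+2}$ which, together with the non-negative discrete Dirichlet form $p\langle \Vg^{p-1},-\Deltag \Vg\rangle$, is moved to the left-hand side of \eqref{e:Vgcomparison}. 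What remains is to dominate the three cross terms $\langle \Vg^{p+2-j}, H_j(\Zg,\CG)\rangle$ for $j=1,2,3$, as well as the linear contribution $A\langle\Vg^{p-1},\Xg\rangle = A\langle \Vg^{p-1},\Vg+\Zg\rangle$ (whose $j=0,1$ pieces fit into the same scheme).

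For each cross term I would apply the Besov duality $|\langle f,g\rangle_{\Le}| \lesssim \Norm{\Bb^{\nu}_{1,1}(\Le)}{f}\,\Norm{\Cc^{-\nu}(\Le)}{g}$ with $f=\Vg^{p+2-j}$, $g=H_j(\Zg,\CG)$, followed by a discrete fractional Leibniz and interpolation estimate controlling $\Norm{\Bb^{\nu}_{1,1}(\Le)}{\Vg^{p+2-j}}$ by a product of the form $\Norm{L^{p+2}(\Le)}{\Vg}^{a_j}\,\langle \Vg^{p-1},-\Deltag \Vg\rangle^{b_j/2}$ with $a_j+b_j = p+2-j$ (these are the discrete analogues of the estimates in \cite[Sec.~3]{tsatsoulis2016spectral}, collected in Section~\ref{sec:discreteBesov}). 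A generalised Young inequality with carefully chosen conjugate exponents then absorbs the $\Vg$-factors into arbitrarily small multiples of $\Norm{L^{p+2}(\Le)}{\Vg}^{p+2}$ and $\langle \Vg^{p-1},-\Deltag \Vg\rangle$, leaving behind only a power $\Norm{\Cc^{-\nu}(\Le)}{H_j(\Zg,\CG)}^{\lambda_{j,i}}$ for each $j$. Since for each $j$ the product estimate admits more than one natural pairing of exponents, this naturally gives rise to two exponents $\lambda_{j,1},\lambda_{j,2}$.

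Finally, the residual contributions are folded into $\Err$: the quintic Taylor remainder $\gamma^2 \langle \Vg^{p-1}, \Xg^5\rangle$ is handled by writing $\gamma^2 \Xg^5 \le \gamma^{-1}\Xg^2$ via the deterministic bound $|\gamma \Xg|\le 1$, then distributing the remaining factors by Hölder and invoking the suboptimal estimate of Proposition~\ref{prop:nonoptimalbound} at a suitable even order (e.g. $L^{p+4}$); the explicit $\gamma^2$ from the Taylor remainder competes against the $\gamma^{-(p+4)/2}$ from Proposition~\ref{prop:nonoptimalbound}, and interpolating with the Besov exponent $\nu$ produces the stated power $\gamma^{(p-2)/6 - 2\nu(p-2)/3}$. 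The main obstacle I expect is the bookkeeping in the Young step: the exponents $\lambda_{j,i}$ must match (up to the moments available from \eqref{e:Hermitebound}) and at the same time leave all $\Vg$-powers absorbable into $C_1 \Norm{L^{p+2}(\Le)}{\Vg}^{p+2} + C_1 \langle\Vg^{p-1},-\Deltag\Vg\rangle$ with a single absolute constant $C_1$ independent of $\gamma$; the secondary difficulty is that the discrete product estimate and the Besov interpolation must be sharp enough to yield the explicit positive power of $\gamma$ in \eqref{e:Vgbounderror} rather than merely an $o(1)$ bound.
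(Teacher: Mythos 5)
Your overall plan---test $\partial_t \Vg$ against $p\Vg^{p-1}$, expand the cubic via the Hermite binomial identity, move $-\frac{p}{3}\Norm{L^{p+2}(\Le)}{\Vg}^{p+2}$ and the discrete Dirichlet form to the left, then control the cross terms $\ang{\Vg^{p-1+j},H_{3-j}(\Zg,\CG)}$ by discrete Besov duality (Proposition~\ref{prop:discretedualityBesov}) together with the interpolation Lemma~\ref{lemma:besovregularity} and Young---is exactly the paper's strategy, and your treatment of the quintic Taylor remainder via $|\gamma\Xg|\le 1$ and Proposition~\ref{prop:nonoptimalbound} is also aligned.

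There is, however, a genuine gap at the very first step: the effective evolution you write for $\Vg$ drops the Kac convolution from the nonlinearity. From the Glauber generator one actually has
\[
\partial_t \Xg \;=\; \Deltag\Xg \;+\; \gamma^{-2}\,\Kg\ast\bigl(\gamma^{-1}\tanh(\beta\gamma\Xg)-\Xg\bigr),
\]
so after Taylor expanding, the cubic term is tested as $\ang{\Kg\ast\Vg^{p-1},\,\Xg^3-3(\CGG+A)\Xg}$ rather than $\ang{\Vg^{p-1},\,\Xg^3-3(\CGG+A)\Xg}$. (The display \eqref{e:Xgequation} suppresses this convolution informally, but the paper's proof of the proposition immediately reinstates it.) The commutation error $\ang{\Kg\ast\Vg^{p-1}-\Vg^{p-1},\cdot}$ is \emph{not} absorbable into $\Oo(\gamma^2\Xg^5)$; controlling it (the paper's terms $D_2$ and $D_3$) requires the telescoping $a^{p-1}-b^{p-1}=(a-b)(a^{p-2}+\dots+b^{p-2})$, a careful generalised Young inequality that produces additional small multiples of $L_s$ and $K_s$, and the a priori bound of Proposition~\ref{prop:nonoptimalbound}. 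Crucially, this commutation term is the \emph{dominant} contribution to $\Err(r)$: it is where the exponent $\gamma^{(p-2)/6-2\nu(p-2)/3}$ in \eqref{e:Vgbounderror} comes from, whereas the Taylor remainder $B(s)$ you do treat is of the strictly smaller order $\gamma^{(p+2)/6-2\nu(2p+4)/3}$. Without this step, your argument both omits a necessary estimate and mispredicts the size of $\Err$.
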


\begin{proof}
The proof follows the argument in \cite[Prop.~3.7]{tsatsoulis2016spectral}, with the important difference that in our case all the operators are discrete operators. Without loss of generality, we will prove \eqref{e:Vgcomparison} starting at time $s=0$ from $\Vng = \Xng$.

In the following calculations, since there is no possibility of confusion, we will use $L^p$ instead of $L^p(\Le)$, and $\ang{\cdot,\cdot}$ instead of $\ang{\cdot,\cdot}_{\Le}$.
From \eqref{e:Xgequation} and \eqref{e:Zgequation} we see that $\Vg(t,x)$ satisfies, for $x \in \Le, t \geq 0$
\begin{align*}
&\Vg(t,x) \\
&\quad= \Vng(x) + \int_0^t \Deltag \Vg(s,x) ds + \int_0^t \gamma^{-2}\Kg \ast \left( \gamma^{-1}\tanh(\beta \gamma \Xg(s,x)) - \Xg(s,x) \right) ds
\end{align*}
and in particular $\Vg(t,x)$ is continuous and weakly differentiable in time, for all $\gamma>0$. Recall that $\beta = 1 + \gamma^2 (\CGG + A)$ and expand the hyperbolic tangent up to third order
\begin{multline*}
\tanh(\beta \gamma \Xg(s))= \gamma \Xg(s) +  \gamma^3 (\CGG + A) \Xg(s) - \frac{\gamma^3}{3} \Xg^3(s) \\
+ \gamma^3(\beta - 1)\Oo \left( \Xg^3(s) \right)+ \Oo\left(\gamma^5 \Xg^5(s) \right).
\end{multline*}
With the above formula the derivative of the discrete $L^p$ norm of $\Vg$ is calculated
\begin{equation}
\label{e:Vpnormineq1}
\Norm{L^p}{\Vg(t)}^p = \Norm{L^p}{\Vng}^p + p \int_0^t \ang{\Vg^{p-1} , \Deltag \Vg }(s)\ ds + \frac{1}{3}  D(s) +  B(s)\ ds
\end{equation}
where 
\[
D(s) = - \ang{ \Kg \ast \Vg^{p-1}(s) , \Xg^3(s) - 3(\CGG + A )\Xg(s)}
\]
and $B(s)$ is produced by the remainder of the Taylor expansion of the hyperbolic tangent
\begin{equation}
\label{e:Vpnormineq1B}
B(s) \leq C \gamma^2   \ang{|\Vg^{p-1}|(s),\CGG |\Xg|^3(s) + |\Xg|^5(s)  }\:.
\end{equation}
where we used the fact that $|A| \leq \CGG$ for $\gamma$ small enough.We will first replace $D(s)$ with
\begin{align}
\label{e:A_1inequality}
D_1(s) &:= -\ang{\Vg^{p-1}(s)  ,\Xg^3(s)  - 3\CGG \Xg(s) } + 3A \ang{\Vg^{p-1}(s)  , \Xg(s) }\\
\nonumber
&\leq -\Norm{L^1}{\Vg^{p+2}(s)} + 3|\ang{\Vg^{p+1}(s),\Zg(s)}|+ 3|\ang{\Vg^{p}(s),H_2(\Zg(s),\CG)}| \\
\nonumber
&+ |\ang{\Vg^{p-1}(s),H_3(\Zg(s),\CG)}| + 3A \Norm{L^1}{\Vg^{p}(s)} + 3A|\ang{\Vg^{p-1}(s),\Zg(s)}|
\end{align}
Let 
\[
L_s \eqdef \Norm{L^1}{\Vg^{p+2}(s)}\:,\qquad K_s \eqdef \ang{ \Vg^{p-1}(s) , \Deltag \Vg(s) }\:.
\]
Those terms are the good terms of \eqref{e:Vpnormineq1}, and the idea is now to bound all the other errors $|D(s)-D_1(s)|$ with expression containing $L_s$ and $K_s$. In the following calculations we assume $\gamma$ to be small enough such that $|A| \leq \CGG$. The cost of replacing $D(s)$ with $D_1(s)$ is given by
\begin{align*}
|D(s) - D_1(s)| &\leq \sum_{x,y \in \Le} \epsilon^4 \Kg(x-y) \left| \Vg^{p-1}(s,y)- \Vg^{p-1}(s,x) \right| \\
&\quad \times \big|\big(\Xg^3(s,y)  -\Xg^3(s,x) \big)  - 3(\CGG + A) \left( \Xg(s,y) - \Xg(s,x)\right)\big|\\
&\leq 3 \sum_{x,y \in \Le} \epsilon^4 \Kg(x-y) \left| \Vg^{p-1}(s,y)- \Vg^{p-1}(s,x) \right| \\
&\quad\times \big(\big| \Vg(s,y)- \Vg(s,x) \big| + \left| \Zg(s,y)- \Zg(s,x) \right| \big)\left( 2 \CG   +  \Xg^2(s,x)\right)\:.
\end{align*}
Denote with
\begin{align*}
D_2  = 3 \sum_{x,y \in \Le} \epsilon^4 \Kg(x-y)& \left| \Vg^{p-1}(s,y)- \Vg^{p-1}(s,x) \right|\\
&\qquad\times\left| \Vg(s,y)- \Vg(s,x) \right|\left( 2 \CG   +  \Xg^2(s,x)\right)\\
D_3  = 3 \sum_{x,y \in \Le} \epsilon^4 \Kg(x-y) &\left| \Vg^{p-1}(s,y)- \Vg^{p-1}(s,x) \right|\\
&\qquad\times\left| \Zg(s,y)- \Zg(s,x) \right|\left( 2 \CG   +  \Xg^2(s,x)\right)\;.
\end{align*}
We will now bound $D_3$ with a small multiple of $L_s$ and $K_s$ plus an error in \eqref{e:Vgbounderror}, the term $D_2$ can be bounded in a similar way.\\
By $a^n - b^n = (a-b)(a^{n-1} + \dots + b^{n-1})$ and the generalized Young inequality
\begin{multline*}
a^{p-1} - b^{p-1} = (a-b)(a^{p-2} + \dots + b^{p-2})\\
\leq |a^{p-1} - b^{p-1}|\frac{|a-b|}{2 \lambda} + (|a|^{p-1} + |b|^{p-1}) \lambda 2^{p-2}
\end{multline*}
Therefore, applying the previous inequality to each summands of $D_3$ and choosing $\lambda = c_1^{-1}(\gamma^{-1} \epsilon)^2\left| \Zg(s,y)- \Zg(s,x) \right|\left( 2 \CG   +  \Xg^2(s,x)\right)$ we have that
\begin{multline}
\label{e:kgsub}
D_3 \leq c_1 K_s + C c_1^{-1}(\epsilon^2 \gamma^{-2})\sum_{x \in \Le} \epsilon^2 |\Vg^{p-1}(s,x)|| \Zg(s,x)|\left(  \CG   +  \Xg^2(s,x)\right)\\
\leq c_1 K_s + c_1 L_s +C c_1^{-1} \Norm{L^{\frac{p-2}{3}}}{\epsilon^2 \gamma^{-2}| \Zg( s)|\left(  \CG   +  \Xg^2( s)\right)}^{(p-2)/3}
\end{multline}
where $c_1>0$ can be chosen to be for instance $c_1 = 1/8$. The last term will be part of the error \eqref{e:Vgbounderror}. Recall that $\epsilon = \gamma^2$ and the last term of \eqref{e:kgsub} is bounded in expectation using Proposition~\ref{prop:nonoptimalbound}, Lemma~\ref{lemma:Lpdiscretebound} and \eqref{e:Hermitebound}
\begin{multline}
\E_{\beta,0}^{\gamma}\left[\Norm{L^{\frac{p-2}{3}}}{\epsilon^2 \gamma^{-2}| \Zg( s)|\left(  \CG   +  \Xg^2( s)\right)}^{(p-2)/3}\right] \leq \\
\E_{\beta,0}^{\gamma} \left[ \Norm{\Cc^{-\nu}(\T^2)}{ \Zg( s) }^{2(p-2)/3} \right]^{1/2} \left( (\gamma^2 \CG)^{\frac{2(p-2)}{3}}  + \E_{\beta,0}^{\gamma}\left[\Norm{L^{2(p-2)/3}}{   \gamma \Xg( s) }^{2(p-2)/3} \right]\right)^{1/2}\\
\leq C(T) \gamma^{\frac{p-2}{6}-2\nu \frac{(p-2)}{3}}
\label{e:expectationerrorbound}
\end{multline}
which is negligible if $\nu$ is small enough and $p > 2$. It is immediate to generalize \eqref{e:expectationerrorbound} to any power, as in \eqref{e:Vgbounderror}.

We will then bound the term $B(t)$ in \eqref{e:Vpnormineq1B} with Proposition~\ref{prop:nonoptimalbound}. Using Young's inequality we have that
\begin{align*}
B(s) &\leq C \gamma^2   \ang{|\Vg^{p-1}|(s),\CGG + \left|\Zg^2(s)+2\Vg(s)\Zg(s) + \Vg^2(s)\right||\Xg|^3(s)} \\
&\leq \frac{1}{24}  \Norm{L^{p+2}}{\Vg(s)}^{p+2} + C \CGG^{\frac{p+2}{3}} \bigNorm{L^1}{(\gamma^{\frac{2}{3}}\Xg(s))^{p+2} }  \\
&+ C  \bigNorm{L^1}{\Zg^{\frac{2p+4}{3}}(\gamma^{2}\Xg^3(s))^{\frac{p+2}{3}} } +  \bigNorm{L^1}{\Zg^{\frac{p+2}{2}}(\gamma^{2}\Xg^3(s))^{\frac{p+2}{2}} }+  \Norm{L^1}{(\gamma^{2}\Xg^3(s))^{p+2} } \;.
\end{align*}
The constant $1/24$ has been arbitrarily chosen in order to control $B(s)$ with a small multiple of $L_s$ plus a quantity that will be part of the error in \eqref{e:Vgbounderror} and can be bounded in expectation, as we did in \eqref{e:expectationerrorbound}, by $C(T) \gamma^{\frac{p+2}{6} - 2\nu \frac{2p+4}{3}}$, which is negligible for $\nu$ small enough.

We are now in the setting of \cite[Eq.~3.13]{tsatsoulis2016spectral}, namely the discrete process $\Vg$ satisfies 
\begin{align}
\label{e:scalarbound0}
&\Norm{L^p}{\Vg(t)}^p  - p \int_0^t \frac{5}{6} K_s + \frac{5}{24} L_s \ ds \\
\nonumber
&\qquad\leq  \Norm{L^p}{\Vng}^p + \frac{p}{3} \int_0^t \sum_{j=0}^2 \binom{3}{j} \ang{\Vg^{p-1+j}(s) , H_{3-j}(\Zg(s),\CG)}\ ds\\
\nonumber
&\qquad+  A \int_0^t | \ang{\Vg^{p-1}(s) , (\Vg(s) + \Zg(s))}|\ ds + \int_0^t \Err(s) ds\:,
\end{align}
where $\E_{\beta,0}^{\gamma}[|\Err(s)|^q]^{\frac{1}{q}} \leq C(T,p,q) \gamma^{\frac{p-2}{6}-2\nu \frac{(p-2)}{3}}$ for any positive $q$. We will now show that, for $\nu$ small enough and $j = 0,1,2$, there exist $\lambda_{j,1},\lambda_{j,1} > 0$
\begin{multline}
\label{e:scalarbound}
\ang{\Vg^{p-1+j} , H_{3-j}(\Zg(s),\CG)} \\
\lesssim \left( L_s^{\frac{p-1+j}{p+2} - \nu \frac{p }{p+2}} K^{\nu}_s + L_s^{\frac{p-1+j}{p+2}} \right) \Norm{\Cc^{-\nu}(\Le)}{H_{3-j}(\Zg(s),\CG)}\\
\leq \frac{1}{7} K_s + \frac{1}{30} L_s + C \sum_{i=1,2} \Norm{\Cc^{-\nu}(\Le)}{H_{3-j}(\Zg(s),\CG)}^{\lambda_i}
\end{multline}
where the last line follows from the Young inequality for $\nu$ sufficiently small. In a similar way
\begin{equation}
\label{e:scalarbound2}
A| \ang{\Vg^{p-1}(s) , (\Vg(s) + \Zg(s))}| \leq  \frac{1}{7} K_s + \frac{1}{30} L_s + C(A)\left(1 + \Norm{\Cc^{-\nu}(\Le)}{\Zg(s)}^{\lambda_i}\right) 
\end{equation}
Recall that all the norms appearing the proof so far are norms on the discrete lattice. The same proof of \cite[Prop.~3.7]{tsatsoulis2016spectral} can be used to prove \eqref{e:scalarbound} and \eqref{e:scalarbound2}, provided the same inequalities hold in the discrete setting.\\
We are going to prove \eqref{e:scalarbound}, \eqref{e:scalarbound2} being essentially the same. Using the duality for discrete Besov spaces proved in Proposition~\ref{prop:discretedualityBesov}
\[
\ang{\Vg^{p-1+j}( s), H_{3-j}(\Zg(s),\CG)}_{\Le} \leq \Norm{\Bb_{1,1}^{\nu}(\Le)}{\Vg^{p-1+j}( s)} \Norm{\Cc^{-\nu}(\Le)}{H_{3-j}(\Zg(s),\CG)}.
\]
We then control $\bigNorm{\Bb_{1,1}^{\nu}(\Le)}{\Vg^{p-1+j}( s)}$ with Lemma~\ref{lemma:besovregularity}. From \eqref{e:besovderivbound} applied to $f(x) = \Vg^{p-1+j}(s,x)$
\[
\Norm{\Bb_{1,1}^{\nu}(\Le)}{f} \lesssim \Norm{L^1(\Le)}{f}^{1-2\nu}  \left(\sum_{x,y \in \Le} \epsilon^4 \Kg(x-y)\epsilon^{-1}\gamma|f(x)-f(y)| \right)^{2\nu}+  \Norm{L^1(\Le)}{f}\:.
\]
We will now estimate the term inside the brackets. For $p$ even and $j \in \N$, we have
\[
|a^{p-1+j}-b^{p-1+j}|^{\frac{p-1 }{p-1+j}} \leq |a^{p-1}-b^{p-1}|
\]
the above equation follows easily from the Minkowski inequality if one assumes $a$ and $b$ to have the same sign. If the $a$ and $b$ have different signs, the inequality follows by the fact that $p$ is an even integer and hence the right-hand-side is equal to $|a|^{p-1} + |b|^{p-1}$. Therefore from the generalized Young inequality for $\lambda>0$
\begin{multline*}
|a^{p-1+j}-b^{p-1+j}|\leq  |a^{p-1}-b^{p-1}| |a^{p-1}-b^{p-1}|^{\frac{j}{p-1}}  \\
\leq  \lambda |a^{p-1}-b^{p-1}||a -b | + \frac{C}{ \lambda }  (|a|^{p-2 + 2j}+|b|^{p-2 + 2j})\:,
\end{multline*}
we have for every $\lambda>0$
\begin{multline*}
\sum_{x,y \in \Le} \epsilon^2 \Kg(x-y)\epsilon^{-1}\gamma|\Vg^{p-1+j}(s,x)-\Vg^{p-1+j}(s,y)| \\
\lesssim \lambda\ang{\Vg^{p-1}(s),\Deltag\Vg(s)} + \frac{1}{\lambda}\Norm{L^1}{\Vg^{p+2+2j}(s)}
\end{multline*}
and optimizing in $\lambda$ we get \eqref{e:scalarbound}. Finally we can combine \eqref{e:scalarbound0} and \eqref{e:scalarbound} to conclude the proof.

We remark that the right-hand-side of \eqref{e:scalarbound} is slightly different from \cite{tsatsoulis2016spectral} since we have to use $\Deltag$, the discrete (long range) Laplacian, which is a good approximation of the continuous Laplacian only on low frequencies.
\end{proof}

We now turn to the proof of Theorem~\ref{trm:tightness}, which follows the lines of \cite[Cor.~3.10]{tsatsoulis2016spectral}.
\begin{proof}[of Theorem~\ref{trm:tightness}]
By the monotonicity of $L^q$ norms it is sufficient to prove the statement of Theorem~\ref{trm:tightness} for $q$ large enough. In the following proof $C$ will denote a constant possibly changing from line to line. The Gibbs measure $\Pgbeta$ is an invariant measure for the Glauber dynamic. Fix $T \geq 0$ and let $T/4 \leq s < t \leq T$
\begin{equation}
\label{e:dymdecomp}
\Egbeta\big[\Norm{\Cc^{-\nu}}{\Xg}^q\big] =\frac{2}{T} \int_{T/2}^T \E_{\beta,0}^{\gamma}\big[\Norm{\Cc^{-\nu}}{\Xg(s)}^q\big] ds\;.
\end{equation}
From the definition of $\Vg$ we can write
\[
\Norm{\Cc^{-\nu}}{\Xg(s)} \leq \Norm{\Cc^{-\nu}}{\Zg(s)} + \Norm{\Cc^{-\nu}}{\Vg(s)} 
\]
By \eqref{e:Hermitebound} proven in \cite[Prop.~5.4]{MourratWeber}, we have that
\begin{equation}
\label{e:Zgbesovbounds}
\E_{\beta,0}^{\gamma}\left[ \sup_{s \in [T/4,T]}\Norm{\Cc^{-\nu}}{H_j(\Zg(s),\CG)}^q\right] \leq C(T,q,j)
\end{equation}
where the proportionality constant may depend on $T$ and $q$. From the definition of the discrete Besov norm it follows that \eqref{e:Zgbesovbounds} holds true also when we replace the Besov norm with the discrete Besov Norm. By Proposition~\ref{prop:boundBesovLp} and Lemma~\ref{lemma:Lpdiscretebound}, for any $q > d/\nu$ and $\kappa>0$ there exists $C(p,\kappa)$
\begin{multline}
\label{e:besovbound}
\Norm{\Cc^{-\nu}}{\Vg(s)} \leq \Norm{L^q(\T^2)}{\Ex \Vg(s)} \\ 
\lesssim  \Norm{L^{q}(\Le)}{\Vg(s)} +  \epsilon^{- \kappa} \Norm{L^{2q-2}(\Le)}{\Vg(s)}^{1 - \frac{1}{q}} \Bigg\{ \sum_{\substack{|x-y| = \epsilon\\ x,y \in \Le}} \epsilon^2 (\Vg(s,y)-\Vg(s,x))^2 \Bigg\}^{\frac{1}{2q}}
\end{multline}
where the proportionality constant depends on $q$ and $\kappa$.\\
In Proposition~\ref{prop:Vgcomparison}, using \eqref{e:Zgbesovbounds} and \eqref{e:Vgbounderror} we obtain that
\begin{multline}
\label{e:boundonVgLpoptimal1}
\E_{\beta,0}^{\gamma}\left[\Norm{L^p(\Le)}{\Vg(t)}^p\right] 
+ C_1 \int_s^t \E_{\beta,0}^{\gamma}\left[\Norm{L^p(\Le)}{\Vg(r)}^p\right]^{\frac{p+2}{p}} dr \\
+  C_1\int_s^t \E_{\beta,0}^{\gamma}\left[\ang{\Vg^{p-1}(r),(-\Deltag)\Vg(r)}_{\Le} \right] dr  \leq \E_{\beta,0}^{\gamma}\left[\Norm{L^p(\Le)}{\Vg(s)}^p \right] + C(p,T)
\end{multline}
From Lemma~\ref{lemma:comparison}, applied to $\E_{\beta,0}^{\gamma}\left[\Norm{L^p(\Le)}{\Vg(t)}^p\right]$ we have that there exists $C(p,T)$ such that for all $T/4 \leq s \leq t \leq T$ we have
\[
\E_{\beta,0}^{\gamma}\left[\Norm{L^p(\Le)}{\Vg(t)}^p\right] \lesssim C(p,T) \left(|t-s|^{-\frac{p}{2}} \vee 1 \right)\:.
\]
Let us choose $s=T/4$ and $t \in [T/2,T]$: from the above inequality we have that
\begin{equation}
\label{e:boundonVgLpoptimal2}
\E_{\beta,0}^{\gamma}\left[\Norm{L^p(\Le)}{\Vg(t)}^p\right] \leq C(p,T)\:.
\end{equation}
At this point we only need to provide a bound for
\[
\sum_{\substack{|x-y| = \epsilon\\ x,y \in \Le}} \epsilon^2  (\Vg(s,y)-\Vg(s,x))^2  \lesssim  \epsilon^2   \sum_{\omega \in \LN} |\omega|^2 |\hat{V}_{\gamma}(s)|^2\;.
\]
By \eqref{prop:kernelbounds}, the operator $\Deltag$ approximates the discrete Laplacian only for low frequencies $|\omega|\leq \gamma^{-1}$
\[
| \widehat{\Deltag \Vg}(s)(\omega)| = \gamma^{-2}(1 -\hKg(\omega))| \hat{V}_{\gamma}(s)(\omega)| \geq c  |\omega|^2| \hat{V}_{\gamma}(s)(\omega)|\;.
\]
On the other hand, for high frequencies $\gamma^{-1} \leq |\omega| \leq \gamma^{-2}$, we have
\[
| \widehat{\Deltag \Vg}(s)(\omega)| \geq \gamma^{-2}(1 -\hKg(\omega))| \hat{V}_{\gamma}(s)(\omega)| \gtrsim \gamma^{-2}| \hat{V}_{\gamma}(s)(\omega)|\;,
\]
hence for all $\omega \in \LN$,
\[
 |\omega|^2 |\hat{V}_{\gamma}(s)|^2 \leq \gamma^{-2}(|\omega|^2 \wedge \gamma^{-2}) | \hat{V}_{\gamma}(s)(\omega)|^2 \leq \gamma^{-4}(1 -\hKg(\omega))| \hat{V}_{\gamma}(s)(\omega)|^2
\]
and therefore
\[
\sum_{\omega \in \LN} |\omega|^2 |\hat{V}_{\gamma}(s)|^2 \leq \gamma^{-2} \ang{\Vg(s),(-\Deltag)\Vg(s)}_{\Le}\;.
\]
Using \eqref{e:boundonVgLpoptimal1}, for $s=T/2$, $t=T$ and $p=2$ we conclude that
\begin{multline}
\label{e:boundonVgLpoptimal3}
 \int_{T/2}^T\sum_{|x-y|=\epsilon} \epsilon^2  (\Vg(s,y)-\Vg(s,x))^2 ds\\
  \leq \epsilon^2 \gamma^{-2} \int_{T/2}^T \E_{\beta,0}^{\gamma}\left[\ang{\Vg(r),(-\Deltag)\Vg(r)}_{\Le} \right] dr \leq  C(T) \;.
\end{multline}
It is sufficient now to control the right-hand-side of \eqref{e:dymdecomp} with \eqref{e:besovbound}. By \eqref{e:Zgbesovbounds}, \eqref{e:boundonVgLpoptimal2} and \eqref{e:boundonVgLpoptimal3}
\begin{align*}
\Egbeta\big[\Norm{\Cc^{-\nu}}{\Xg}^q\big] &= \frac{2}{T} \int_{T/2}^T \E_{\beta,0}^{\gamma}\big[\Norm{\Cc^{-\nu}}{\Zg(s)}^q\big] + \E_{\beta,0}^{\gamma}\big[\Norm{\Cc^{-\nu}}{\Vg(s)}^q\big] ds \\
\leq C(T,q,\kappa&) \int_{T/2}^T \E_{\beta,0}^{\gamma}\big[\Norm{\Cc^{-\nu}}{\Zg(s)}^q\big] + \E_{\beta,0}^{\gamma}\Norm{L^q}{\Vg(s)}^q ds \\
+ C(T,q,\kappa&) \E_{\beta,0}^{\gamma} \int_{T/2}^T \Norm{L^{2q-1}}{\Vg(s)}^{q-1} \epsilon^{-q \kappa} \Bigg\{ \sum_{\substack{|x-y| = \epsilon\\ x,y \in \Le}} \epsilon^2 (\Vg(s,y)-\Vg(s,x))^2 \Bigg\}^{\frac{1}{2}} ds \\
\leq C(T,q,\kappa&)\left( 1 +  \epsilon^{-q \kappa} \epsilon \gamma^{-1} \left\{\int_{T/2}^T \E_{\beta,0}^{\gamma}\left[\ang{\Vg(r),(-\Deltag)\Vg(r)}_{\Le} \right] dr\right\}^{1/2} \right)\;,
\end{align*}
where in the last line we applied the Cauchy-Schwarz inequality. The claim follows by choosing $\kappa>0$ small enough.
\end{proof}

\section{Bounds for discrete Besov spaces}
\label{sec:discreteBesov}

In this section we collect and prove some results in the context of discrete Besov spaces which are difficult to find. Let us first define the Besov norm on the discrete torus as follows. The definitions and proofs are based upon \cite{MourratWeber,MourratWeberGlobal} and \cite{bahouri2011fourier}.
In \cite[Prop.~2.10]{bahouri2011fourier} it is proven the existence of continuous functions $\tilde{\chi},\chi : \R^d \to \R$ such that
\begin{equ}
\supp (\tilde{\chi}) \subseteq B_0(4/3)\;,\qquad 
\supp (\chi) \subseteq B_0(8/3) \setminus B_0(3/4)
\end{equ}
and such that, setting
\[
\chi_{-1} \eqdef \tilde{\chi}, \qquad \chi_k(\cdot) \eqdef \chi(2^{-k}\ \cdot ) \qquad \text{ for } (k \geq 0)
\]
one has $\tilde{\chi}(r) + \sum_{k=0}^{\infty} \chi(2^{-k}\ r) = 1$ for all  $r \in \R^d$.

For $g : \T^d \to \R$ define the projection onto the $k$-th Paley-Littlewood block as
\begin{equation}
\label{e:PaleyLittlewoodproj}
\delta_k g(x) = 2^{-d}\sum_{\omega \in \Z^d} \chi_k(\omega)\ \widehat{g}(\omega) e_{\omega}(x)
\end{equation}
for $x \in \T^d$ and $k \geq -1$. 
(The factor $2^{-d}$ is such that $\sum_k \delta_k g = g$, see also \eqref{e:Fourierinversion}.)
Recall that the continuous Besov norm given in \eqref{e:Besov} is then defined in terms of these projections.

We now define a version of the Besov norm for functions defined in the discrete lattice. 
This is obtained by not only extending the function with the extension operator of Section~\ref{subsec:notations}, but also performing the $L^p$ norm in \eqref{e:Besov} on the discrete space $\Le^d$ instead of $\T^d$. Let $f : \Le \to \R$, for $\nu \in \R$, $p,q \in [1,\infty]$,  with $\Norm{\Bb^{\nu}_{p,q}(\Le^d)}{\cdot}$ we define
\begin{equation}
\label{e:Besovdiscretedef}
\Norm{\Bb^{\nu}_{p,q}(\Le^d)}{f} \eqdef \begin{cases}
\left( \sum_{k \geq -1} 2^{\nu k q} \Norm{L^p(\Le^d)}{\delta_k \Ex(f)}^q \right)^{\frac{1}{q}} & \text{ if }q < \infty\\
\sup_{k \geq -1} 2^{\nu k} \Norm{L^p(\Le^d)}{\delta_k \Ex(f)}& \text{ if }q = \infty
\end{cases}
\end{equation}
It is clear, from the definitions of $\Ex(g)$, \eqref{e:PaleyLittlewoodproj} and \eqref{e:etakdef}, that for $x \in \Le^d$
\[
\delta_k\Ex(f)(x) = 2^{-d}\sum_{\omega \in \LN^d} \chi_k(\omega)\ \widehat{f}(\omega) e_{\omega}(x) = \etaN_{k} \ast f(x) \qquad \text{ for } x \in \Le
\]
where $\etaN_{k}(x)$ is defined, for $k \geq -1$ and $x \in \T^d$, by
\begin{equation}
\label{e:etakdef}
\etaN_{k}(x) \eqdef 2^{-d} \sum_{\omega \in \LN^d } \chi_k( \omega) e_{\omega}(x)\;,
\end{equation}
where we abused the notations omitting $N$ from the definition of $\etaN_{k}$.

\begin{remark}
	We could have easily avoided the definition of a discrete version of the Besov norm. 
	There is only one point where such definition is really needed, and this is in Proposition~\ref{prop:Vgcomparison} and Lemma~\ref{lemma:besovregularity}, where we need to control the Besov norm with a combination of discrete $L^p$ norms.
\end{remark}

The next lemma is a minor generalisation of \cite[Lemma~B.6]{MourratWeber}.
\begin{lemma}
\label{lemma:Lpdiscretebound}
For $p \in [1,\infty]$ and $\kappa > 0$, there exists a constant $C$ such that for all $f: \Le \to \R$,
\begin{align}
\label{e:Lpdiscretebound1}
\Norm{L^p(\T^2)}{ \Ex(f)} &\leq C \log^2(\epsilon^{-1}) \Norm{L^p(\Le)}{f} \\
\label{e:Lpdiscretebound2}
\Norm{L^p(\T^2)}{ \Ex(f)} &\leq C\Norm{L^p(\Le)}{ f} + C \epsilon^{ - \kappa} \Norm{L^{2p-2}(\Le)}{f}^{1 - \frac{1}{p}} \Bigg\{ \sum_{\substack{|x-y| = \epsilon\\ x,y \in \Le}}\epsilon^2 (f(y)-f(x))^2 \Bigg\}^{\frac{1}{2p}}\;.
\end{align}
\end{lemma}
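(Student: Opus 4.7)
The plan is to express $\Ex(f)$ as a mixed discrete/continuous convolution with the Dirichlet kernel $D_\epsilon(x) := \frac{1}{4}\sum_{\omega \in \LN} e_\omega(x)$, i.e.\ $\Ex(f)(x) = \sum_{y\in\Le}\epsilon^2 D_\epsilon(x-y)f(y)$. The tensor product structure $D_\epsilon(x_1,x_2) = D^{(1)}_\epsilon(x_1)D^{(1)}_\epsilon(x_2)$ together with the classical one--dimensional bound $\Norm{L^1(\T)}{D^{(1)}_\epsilon} \lesssim \log(\epsilon^{-1})$ gives $\Norm{L^1(\T^2)}{D_\epsilon} \lesssim \log^2(\epsilon^{-1})$.

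For \eqref{e:Lpdiscretebound1}, apply Jensen's inequality pointwise:
\[
|\Ex(f)(x)|^p \leq \Bigl(\sum_{y\in\Le}\epsilon^2 |D_\epsilon(x-y)|\Bigr)^{p-1} \sum_{y\in\Le}\epsilon^2 |D_\epsilon(x-y)| |f(y)|^p.
\]
Integrating over $x\in\T^2$ and using that $\int_{\T^2}|D_\epsilon(x-y)|\,dx = \Norm{L^1(\T^2)}{D_\epsilon}$ is independent of $y$ yields $\Norm{L^p(\T^2)}{\Ex(f)} \leq \Norm{L^1(\T^2)}{D_\epsilon}\Norm{L^p(\Le)}{f}$, which is \eqref{e:Lpdiscretebound1}.

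For \eqref{e:Lpdiscretebound2}, decompose $\Ex(f) = \tilde f + g$, where $\tilde f(x) := f(\pi_\epsilon(x))$ is the piecewise constant extension along the cells $Q_{y_0} = y_0 + [-\epsilon/2,\epsilon/2]^2$. Then $\Norm{L^p(\T^2)}{\tilde f} = \Norm{L^p(\Le)}{f}$ exactly, so it remains to bound $g$ by the second term in \eqref{e:Lpdiscretebound2}. Write $\int|g|^p = \int|g|^{p-1}\cdot|g|$ and apply Hölder with exponents $2,2$ to get the interpolation
\[
\Norm{L^p(\T^2)}{g} \leq \Norm{L^{2p-2}(\T^2)}{g}^{1-1/p}\, \Norm{L^2(\T^2)}{g}^{1/p}.
\]
The $L^{2p-2}$ factor is handled by the triangle inequality and \eqref{e:Lpdiscretebound1} applied at $2p-2$, producing a $\log^2(\epsilon^{-1})$ loss that I absorb into $\epsilon^{-\kappa'}$ with $\kappa' = \kappa \cdot p/(p-1)$, giving $\Norm{L^{2p-2}(\T^2)}{g} \lesssim \epsilon^{-\kappa'}\Norm{L^{2p-2}(\Le)}{f}$.

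For the $L^2$ factor, note that on each cell $Q_{y_0}$ one has $\tilde f = f(y_0) = \Ex(f)(y_0)$, so $g(x) = \Ex(f)(x) - \Ex(f)(y_0)$. A Cauchy--Schwarz estimate along coordinate line segments within $Q_{y_0}$, combined with a sampling inequality (available because $\nabla\Ex(f)$ has Fourier support in $\LN$, so its one--dimensional slices reconstruct the full $L^2(\T^2)$ norm up to a constant), yields after summing over cells
\[
\Norm{L^2(\T^2)}{g}^2 \lesssim \epsilon^2\,\Norm{L^2(\T^2)}{\nabla \Ex(f)}^2 \simeq \epsilon^2 \sum_{\omega\in\LN}|\omega|^2 |\widehat f(\omega)|^2,
\]
the last step by Parseval. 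On the other hand, a direct Fourier computation on $\Le$ gives
\[
\sum_{\substack{|x-y|=\epsilon\\ x,y\in\Le}}\epsilon^2 (f(y)-f(x))^2 \simeq \sum_{\omega\in\LN}\Bigl[\sin^2\bigl(\tfrac{\pi\omega_1\epsilon}{2}\bigr)+\sin^2\bigl(\tfrac{\pi\omega_2\epsilon}{2}\bigr)\Bigr]|\widehat f(\omega)|^2,
\]
and since $|\omega_i\epsilon|\leq 1$ on $\LN$, the elementary inequality $\sin\theta \geq 2\theta/\pi$ on $[0,\pi/2]$ gives $\epsilon^2\sum_\omega |\omega|^2|\widehat f(\omega)|^2 \lesssim \sum_{|x-y|=\epsilon}\epsilon^2(f(y)-f(x))^2$. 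Hence $\Norm{L^2(\T^2)}{g} \lesssim \bigl\{\sum_{|x-y|=\epsilon}\epsilon^2(f(y)-f(x))^2\bigr\}^{1/2}$. Plugging both factors into the interpolation produces exactly \eqref{e:Lpdiscretebound2}.

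The main technical obstacle is the cell--level inequality $\Norm{L^2(Q_{y_0})}{\Ex(f) - \Ex(f)(y_0)} \lesssim \epsilon\Norm{L^2(Q_{y_0})}{\nabla\Ex(f)}$, since in two dimensions pointwise values are not controlled by the $H^1$ norm for generic functions; one must use that $\Ex(f)$ is band--limited in $\LN$ (via a Shannon--type sampling inequality) to recover $L^2(\T^2)$ from line--segment integrals. Once this is in place, all remaining steps are routine Fourier and Hölder manipulations.
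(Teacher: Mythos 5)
Your proof is correct and follows the paper's strategy in broad outline, but the details for both parts are organized differently enough to be worth commenting on.

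For \eqref{e:Lpdiscretebound1}, the paper does not use Young's inequality directly: it bounds the Dirichlet kernel pointwise by $\prod_j(\epsilon^{-1}\wedge|x_j-z_j|^{-1})$, performs a translation $z\mapsto z+[x]_\epsilon$, and only then takes the $L^p(\T^2,dx)$ norm, so the $\log^2(\epsilon^{-1})$ emerges as a harmonic sum $\sum_{k\le\epsilon^{-1}}k^{-1}$ on the lattice. Your Jensen route is cleaner, but note that integrating the Jensen bound produces \emph{two} factors, the continuous norm $\int_{\T^2}|D_\epsilon|$ and the discrete-sum factor $\sup_x\sum_{y\in\Le}\epsilon^2|D_\epsilon(x-y)|$, in complementary powers $1/p$ and $1-1/p$; the statement ``yields $\Norm{L^p(\T^2)}{\Ex(f)}\le\Norm{L^1(\T^2)}{D_\epsilon}\Norm{L^p(\Le)}{f}$'' quietly identifies the two, which is fine only because both are of the same order $\log^2(\epsilon^{-1})$ (via the $\epsilon^{-1}\wedge|a|^{-1}$ bound), but it should be said.

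For \eqref{e:Lpdiscretebound2}, your explicit decomposition $\Ex(f)=\tilde f+g$ with $\tilde f$ piecewise constant, followed by the interpolation $\Norm{L^p}{g}\le\Norm{L^{2p-2}}{g}^{1-1/p}\Norm{L^2}{g}^{1/p}$, is precisely the paper's argument made explicit; the paper folds the decomposition into a single H\"older step. The ``main technical obstacle'' you identify is real, and the paper indeed skips it (``it is easy to see''), but it is even cleaner than a Shannon sampling argument: since $\nabla\Ex(f)$ has Fourier support in $\Lambda_N$, the function $|\nabla\Ex(f)|^2$ has support in $\Lambda_N-\Lambda_N=\{1-2N,\dots,2N-1\}^2$, which contains no non-zero element of $2N\Z^2$, so there is \emph{no aliasing at all}: $\sum_{y\in\Le}\epsilon^2|\nabla\Ex(f)(y+s)|^2=\Norm{L^2(\T^2)}{\nabla\Ex(f)}^2$ identically in $s\in\T^2$. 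With that observed, writing $\Ex f(x)-\Ex f(y_0)=\int_0^1(x-y_0)\cdot\nabla\Ex f(y_0+t(x-y_0))\,dt$, applying Cauchy--Schwarz, and summing cells gives your $\Norm{L^2}{g}^2\lesssim\epsilon^2\Norm{L^2}{\nabla\Ex f}^2$ with no Poincar\'e-type subtlety. The final Fourier comparison between $\epsilon^2\sum_\omega|\omega|^2|\hat f(\omega)|^2$ and the discrete Dirichlet form is exactly as in the paper. Net: correct, same route, with a welcome explicit treatment of a step the paper elides.
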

The same lemma holds true in any dimension, with a factor $ C(d) \log^d(\epsilon^{-1})$ in \eqref{e:Lpdiscretebound1}.
\begin{proof}
We first show \eqref{e:Lpdiscretebound1}. Recall that from the definition of the extension operator $\Ex f(x) = f(x)$ for $x \in \Le^2$, and 
\[
\Ex(f)(x)= \sum_{z \in \Le} \epsilon^2 f(z) \prod_{j=1,2}\frac{\sin\left(\pi \epsilon^{-1} (x_j - z_j)\right)}{2\sin\left( \frac{\pi}{2}(x_j - z_j)\right)} \qquad x \in \T^2\:.
\]
Using the inequality $\sin(2\epsilon^{-1}a)/\sin(a) \lesssim \epsilon^{-1} \wedge |a|^{-1}$ we can bound
\[
|\Ex(f)(x)| \lesssim \sum_{z \in \Le} \epsilon^2| f(z) |\prod_{j=1,2} \epsilon^{-1} \wedge |z_i-x_i|^{-1}\:.
\]
For $x \in \T^2$, denote with $[x]_{\epsilon}$ the closest point to $x$ in $\Le$. We can then rewrite the above inequality as
\[
|\Ex(f)(x)| \lesssim \sum_{z \in \Le} \epsilon^2 |f(z+[x]_{\epsilon}) |\prod_{j=1,2} \epsilon^{-1} \wedge |z_i+ [x]_{\epsilon,i}-x_i|^{-1}\:.
\]
we observe now that if $|z_i| \leq \epsilon$, then $|z_i+ [x]_{\epsilon,i}-x_i|^{-1} \gtrsim \epsilon^{-1}$, while if $|z_i| > \epsilon$ we have that  $|z_i+ [x]_{\epsilon,i}-x_i|^{-1} \lesssim |z_i|^{-1} \lesssim\epsilon^{-1}$, hence
\[
|\Ex(f)(x)| \lesssim \sum_{z \in \Le} \epsilon^2 |f(z+[x]_{\epsilon})| \prod_{j=1,2} \epsilon^{-1} \wedge |z_i|^{-1}\:,
\]
and taking the $L^p(\T^2,dx)$ norm yields
\[
 \Big(\int_{\T^2}|f( [x]_{\epsilon})|^p dx\Big)^{\frac{1}{p}} \Big( 1 + 2 \sum_{1 \leq k \leq \epsilon^{-1}} k^{-1}\Big)^2 \lesssim \Norm{L^p(\Le)}{f} \log^2(\epsilon^{-1})\;,
\]
as claimed. The inequality \eqref{e:Lpdiscretebound2} is a consequence of H\"older's inequality
\begin{align*}
\Norm{L^p(\T^2)}{ \Ex(f)}^p &\lesssim \Norm{L^p(\Le)}{f}^p + \int_{|y|\leq \epsilon/2}  |\Ex f(x+y) - f(x)|^p d^2y\\
\leq &\Norm{L^p(\Le)}{f}^p+ \Norm{L^{2p-2}(\T^2)}{\Ex f}^{p-1} \left(\sum_{x \in \Le} \int_{|y|\leq \epsilon/2} |f(x+y) - f(x)|^2 d^2y\right)^{\frac{1}{2}}\\
\leq &\Norm{L^p(\Le)}{f}^p+ \Norm{L^{2p-2}(\T^2)}{\Ex f}^{p-1} \left(\sum_{x \in \Le} \int_{|y|\leq \epsilon/2} |f(x+y) - f(x)|^2 d^2y\right)^{\frac{1}{2}}\\
\leq &\Norm{L^p(\Le)}{f}^p+ \epsilon  \Norm{L^{2p-2}(\T^2)}{\Ex f}^{p-1} \Norm{\dot{H}^1(\T^2)}{\Ex f}
\end{align*}
where we denoted by $\Norm{\dot{H}^1(\T^2)}{\Ex f}$ the homogeneous Sobolev seminorm. From the definition of the extension operator it is easy to see that
\[
\Norm{\dot{H}^1(\T^2)}{\Ex f}^2 = \sum_{\omega \in \LN} |\omega|^2|\hat{f}(\omega)|^2 \lesssim  \sum_{\substack{|x-y| = \epsilon\\ x,y \in \Le}} \epsilon^2 \frac{(f(y)-f(x))^2}{\epsilon^2} \;,
\]
and an application of \eqref{e:Lpdiscretebound1} yields \eqref{e:Lpdiscretebound2}.
\end{proof}

Let $d \in \N^+$ and $\Le^d$ a discretisation of the $d$-dimensional torus $\T^d=[-1,1]^d$.

\begin{lemma}
\label{lemma:etakbound}
Let $\chi: \R^d \to \R$ be a smooth function with compact support. For every $p \in [0,\infty]$ we have
$
\sup_{\lambda \in (0,1)} \lambda^{d\left( 1-\frac{1}{p}\right)} \Norm{L^p(\Le^d)}{\sum_{w \in \LN } \chi(\lambda w) e_w}< \infty\;.
$
\end{lemma}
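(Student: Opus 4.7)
The plan is to combine Poisson summation with a Riemann-sum comparison. Let $R>0$ be such that $\supp(\chi)\subset B(0,R)$, and write $K_\lambda(x):=\sum_{w\in\LN^d}\chi(\lambda w)e_w(x)$.

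First I would treat the regime $\lambda\geq R\epsilon$, in which the truncation to $\LN^d$ is inactive: since $\chi(\lambda w)=0$ for $|w|>N$, $K_\lambda$ coincides with the full sum over $\Z^d$. Poisson summation then gives the identity
\begin{equation*}
K_\lambda(x) \;=\; \lambda^{-d}\sum_{k\in\Z^d}\check{\chi}\!\left(\tfrac{\pi(x-2k)}{\lambda}\right)\;,
\end{equation*}
where $\check{\chi}$ is Schwartz because $\chi$ is smooth with compact support. This yields the pointwise decay $|K_\lambda(x)|\leq C_M\,\lambda^{-d}\sum_{k\in\Z^d}(1+|x-2k|/\lambda)^{-M}$ on $\T^d$, and integration against $|\cdot|^p$ (with $M$ chosen so that $Mp>d$) produces $\|K_\lambda\|_{L^p(\T^d)}\lesssim\lambda^{-d(1-1/p)}$. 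Since $K_\lambda$ is a trigonometric polynomial of degree at most $N$ and the pointwise bound varies on scale $\lambda\geq\epsilon$, the discrete Riemann sum $\|K_\lambda\|_{L^p(\Le^d)}^p$ is controlled by a constant multiple of the corresponding continuous integral, so the same estimate transfers to $\Le^d$.

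In the opposite regime $\lambda<R\epsilon$ the truncation becomes active, but the number of nonzero terms in $\sum_w|\chi(\lambda w)|$ is still bounded by $(2R/\lambda+1)^d$, so that $\|K_\lambda\|_{L^\infty(\Le^d)}\leq\sum_w|\chi(\lambda w)|\lesssim\lambda^{-d}$. Parseval on $\Le^d$ gives $\|K_\lambda\|_{L^2(\Le^d)}^2=|\T^d|\sum_{w\in\LN^d}|\chi(\lambda w)|^2$, which is bounded by $\lambda^{-d}\int|\chi|^2$ via a Riemann comparison. Riesz--Thorin interpolation between these two endpoints yields the required bound for every $p\in[2,\infty]$.

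The main obstacle is extending the estimate to $p\in[1,2)$ uniformly in $\epsilon$ in the small-$\lambda$ regime, because naive duality gives only $\|K_\lambda\|_{L^1(\Le^d)}\lesssim\lambda^{-d/2}$, which is insufficient. To close this gap one has to refine the Poisson-summation argument so that it accommodates the lattice aliasing: the Fourier coefficients of $K_\lambda|_{\Le^d}$ differ from those of the periodised Schwartz kernel by shifts in $2N\Z^d$, whose contribution can be summed explicitly using the Schwartz decay of $\check{\chi}$. This is the most delicate part of the argument and is where the smoothness and compact support of $\chi$ genuinely enter.
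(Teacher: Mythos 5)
The paper does not prove Lemma~\ref{lemma:etakbound}: it cites \cite[Lem.~B.1]{MourratWeber} for the $L^p(\T^d)$ version and asserts that the extension to $\Le^d$ is immediate, so there is no in-text argument to compare your proof against directly. Taking your attempt on its own merits: the regime $\lambda\geq R\epsilon$ is handled correctly (there the truncation to $\LN^d$ is inactive, Poisson summation applies to the full sum over $\Z^d$, and the Riemann-sum comparison is legitimate since the peak contribution $\epsilon^d\lambda^{-dp}$ of the lattice point nearest the origin is $\lesssim\lambda^{d(1-p)}$ precisely because $\lambda\gtrsim\epsilon$), and the $L^\infty$--$L^2$ interpolation for $p\in[2,\infty]$ in the small-$\lambda$ regime is also fine.

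However, the case $p\in[1,2)$ with $\lambda<R\epsilon$ is where the content of the lemma lies, and you only sketch a strategy there without carrying it out, so the proof is incomplete. Moreover, the sketched repair does not go through as described: if one compares $K_\lambda$ on $\Le^d$ to the full periodised kernel $\tilde K_\lambda=\sum_{w\in\Z^d}\chi(\lambda w)e_w$ and tries to control the aliasing correction $\sum_{w\in\LN^d}\sum_{k\neq 0}\chi(\lambda(w+2Nk))e_w$ separately, one finds that for $\lambda\ll\epsilon$ both pieces have $L^1(\Le^d)$ norm of order $(\epsilon/\lambda)^d\gg 1$ (the single lattice point at the origin already contributes $\epsilon^d|\tilde K_\lambda(0)|\sim(\epsilon/\lambda)^d$), and it is exactly their cancellation at $x=0$ that produces the correct $\Oo(1)$ bound for $K_\lambda$ itself. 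The split is therefore unstable in $L^1$ and the triangle inequality cannot be used. A crude alternative that avoids the split, writing $\chi(\lambda w)=\int\check\chi(\xi)e^{-i\lambda w\cdot\xi}\,d\xi$ and pulling the discrete $L^1$ norm through the resulting product of Dirichlet kernels, yields only
\[
\Norm{L^1(\Le^d)}{K_\lambda}\;\leq\;\int|\check\chi(\xi)|\sum_{x\in\Le^d}\epsilon^d\prod_{j=1}^d\Big|\sum_{w_j\in\LN}e^{iw_j(\pi x_j-\lambda\xi_j)}\Big|\,d\xi\;\lesssim\;(\log\epsilon^{-1})^d\,,
\]
which is not uniform in $\epsilon$. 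Closing the gap genuinely requires exploiting the smoothness of $\chi$ through a mechanism that respects the discrete structure, e.g.\ a summation-by-parts argument on the frequency torus $\Z^d/2N\Z^d$ applied to the $2N\Z^d$-periodisation of $\chi(\lambda\cdot)$ together with the elementary weights $\prod_j(1-\cos\pi x_j)^M$; this is the substantive part of the proof and needs to be written out rather than gestured at.
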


The above result is proven in \cite[Lem.~B.1]{MourratWeber} for the $L^p$ in the whole torus, the generalisation to $\Le^d$ follows trivially from the same argument. See also \cite{MourratWeberGlobal} for a proof in the case of a continuous Fourier transform.
We quote in the next proposition a useful embedding between Besov and $L^p$ spaces proven, for instance, in \cite[Prop.~2.39]{bahouri2011fourier}.

\begin{proposition}
\label{prop:boundBesovLp}
For any $\nu > 0$, and $p \geq \frac{d}{\nu}$ there exists $C>0$
\[
\Norm{\Bb^{-\nu}_{\infty,\infty}(\T^d)}{f} \leq C \Norm{L^p(\T^d)}{f}
\]
\end{proposition}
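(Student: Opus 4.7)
The plan is to prove the embedding $L^p(\T^d)\hookrightarrow \Bb^{-\nu}_{\infty,\infty}(\T^d)$ by bounding each Paley--Littlewood block $\delta_k f$ in $L^\infty$ via a convolution estimate, then checking that the resulting geometric factors summarise with the prefactor $2^{-\nu k}$ when $\nu \geq d/p$.

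First, I would unwind the definition of the Besov norm, writing
\[
\Norm{\Bb^{-\nu}_{\infty,\infty}(\T^d)}{f} = \sup_{k \geq -1} 2^{-\nu k} \Norm{L^\infty(\T^d)}{\delta_k f}\;,
\]
and note that $\delta_k f = \eta_k \ast f$, where $\eta_k(x) = 2^{-d}\sum_{\omega \in \Z^d}\chi_k(\omega) e_\omega(x)$ is the continuous analogue of \eqref{e:etakdef}. By Young's convolution inequality (valid on the torus), with $p' = p/(p-1)$ the H\"older conjugate,
\[
\Norm{L^\infty(\T^d)}{\delta_k f} \leq \Norm{L^{p'}(\T^d)}{\eta_k}\,\Norm{L^p(\T^d)}{f}\;.
\]

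The key step is then the bound $\Norm{L^{p'}(\T^d)}{\eta_k} \lesssim 2^{dk/p}$, uniformly in $k \geq 0$, which is the continuous-torus version of Lemma~\ref{lemma:etakbound} (taking $\lambda = 2^{-k}$ so that the exponent is $d(1-1/p') = d/p$); the case $k=-1$ contributes only a fixed constant since $\eta_{-1}$ is a fixed smooth function. Substituting back gives
\[
2^{-\nu k}\Norm{L^\infty(\T^d)}{\delta_k f} \lesssim 2^{(d/p-\nu)k}\,\Norm{L^p(\T^d)}{f}\;.
\]
Under the hypothesis $p \geq d/\nu$ we have $d/p - \nu \leq 0$, so the right-hand side is uniformly bounded in $k$, which yields the desired inequality after taking the supremum over $k$.

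I do not expect genuine obstacles here: the proof is a standard Bernstein-type estimate plus the definition of the Besov norm, the only care being to invoke Lemma~\ref{lemma:etakbound} in its continuous-torus version (which follows by the same argument, sending $\epsilon \to 0$). A reference to \cite[Prop.~2.39]{bahouri2011fourier} suffices to conclude, so in practice the statement can be disposed of in a few lines.
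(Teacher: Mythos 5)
Your argument is correct. The paper itself does not prove Proposition~\ref{prop:boundBesovLp} but refers the reader to \cite[Prop.~2.39]{bahouri2011fourier}; your proof is a correct, self-contained reconstruction of exactly the standard chain of estimates (Young's inequality for the Paley--Littlewood block plus the Bernstein bound $\Norm{L^{p'}(\T^d)}{\eta_k}\lesssim 2^{dk/p}$) that underlies that reference. One small factual nit: the continuous-torus version of Lemma~\ref{lemma:etakbound} is the original statement in \cite[Lem.~B.1]{MourratWeber}, with the discrete $\Le^d$ version being the generalisation, so there is no need to pass to a limit $\epsilon\to 0$ --- you can invoke the continuous bound directly. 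The exponent bookkeeping $d(1-1/p')=d/p$ and the conclusion that $d/p-\nu\leq 0$ under $p\geq d/\nu$ are both correct.
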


We also mention some classical estimates for Besov norms restated in case of discrete Besov spaces. The proofs are omitted since they follow closely their continuous counterparts.

\begin{proposition}[Product estimates for discrete Besov spaces]
Let $\beta < 0 < \alpha$ and $p,q \in [1,\infty]$. There exists $C>0$ such that, uniformly over $\eps \in (0,1]$,
\[
\Norm{\Bb^{\beta}_{p,q}(\Le)}{f}\leq C\Norm{\Bb^{\alpha}_{p,q}(\Le)}{f}\Norm{\Bb^{\beta}_{p,q}(\Le)}{g}\;.
\]
\end{proposition}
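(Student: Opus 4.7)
The first thing to notice is that the left-hand side as written must contain a typo: for the inequality to have content it has to read $\Norm{\Bb^{\beta}_{p,q}(\Le)}{fg}\leq C\Norm{\Bb^{\alpha}_{p,q}(\Le)}{f}\Norm{\Bb^{\beta}_{p,q}(\Le)}{g}$, so this is the statement I plan to prove. Since the paper explicitly says the proof follows its continuous counterpart, my plan is to transcribe Bony's paraproduct decomposition into the discrete setting and then invoke the building blocks already established in Section~\ref{sec:discreteBesov} (extension operator, $\delta_k\Ex(\cdot) = \etaN_k * \cdot$, Lemma~\ref{lemma:etakbound}, Lemma~\ref{lemma:Lpdiscretebound}).

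First, I write $fg = \Pi_<(f,g) + \Pi_>(f,g) + \Pi_0(f,g)$ with
\[
\Pi_<(f,g) = \sum_{j\geq -1}\Bigl(\sum_{\ell\leq j-2}\delta_\ell f\Bigr)\delta_j g\;,\quad \Pi_>(f,g)=\Pi_<(g,f)\;,\quad \Pi_0(f,g)=\sum_{|j-k|\leq 1}\delta_j f\,\delta_k g\;.
\]
Each summand is spectrally localised: $S_{j-1}f\cdot \delta_j g$ has Fourier support in an annulus of size $\sim 2^j$, while $\delta_j f\cdot \delta_k g$ with $|j-k|\leq 1$ has support in a ball of radius $\lesssim 2^j$. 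This is the only place where the discrete nature enters non-trivially: one must check that these support statements, together with the $L^p$-boundedness of the discrete Paley--Littlewood projectors $f\mapsto \etaN_k \ast f$, hold uniformly in $\eps$. Both facts are immediate from Lemma~\ref{lemma:etakbound} applied to $\chi_k$ and to the bump function $\chi$ localised on an annulus/ball.

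Next I bound each of the three pieces. For $\Pi_<(f,g)$, the spectral support lets me write
\[
\Norm{\Bb^{\beta}_{p,q}(\Le)}{\Pi_<(f,g)} \lesssim \Bigl(\sum_{j\geq -1} 2^{j\beta q}\Norm{L^p(\Le)}{S_{j-1}f\,\delta_j g}^q\Bigr)^{1/q} \lesssim \Norm{L^\infty(\Le)}{f}\Norm{\Bb^{\beta}_{p,q}(\Le)}{g}\;,
\]
and the uniform $L^\infty$ bound $\Norm{L^\infty(\Le)}{f}\lesssim \Norm{\Bb^{\alpha}_{p,q}(\Le)}{f}$ follows (for $p$ large) from the discrete Besov embedding obtained by combining Proposition~\ref{prop:boundBesovLp} with Lemma~\ref{lemma:Lpdiscretebound}; for general $p,q$ one proceeds as in \cite{bahouri2011fourier} using Bernstein's inequality. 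The high--low piece $\Pi_>(f,g)$ is handled symmetrically, yielding control by $\Norm{\Bb^{\alpha}_{p,q}(\Le)}{f}\Norm{\Bb^{\beta}_{p,q}(\Le)}{g}$ via a geometric sum that converges thanks to $\beta<0$.

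For the resonant term $\Pi_0(f,g)$, because each summand is spectrally localised in a \emph{ball} of radius $\lesssim 2^j$, one does not get one-to-one localisation across $j$; instead one estimates
\[
2^{j(\alpha+\beta)}\Norm{L^p(\Le)}{\delta_j f\,\delta_{j'}g}\lesssim \bigl(2^{j\alpha}\Norm{L^p(\Le)}{\delta_j f}\bigr)\bigl(2^{j'\beta}\Norm{L^\infty(\Le)}{\delta_{j'}g}\bigr)\;,
\]
and sums in $j$ via Young's inequality to land in $\Bb^{\alpha+\beta}_{p,q}$; this step requires $\alpha+\beta>0$, which I read as the tacit side-condition of the statement. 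Once $\Pi_0(f,g)\in \Bb^{\alpha+\beta}_{p,q}(\Le)$ with the desired bilinear bound, the embedding $\Bb^{\alpha+\beta}_{p,q}\hookrightarrow \Bb^\beta_{p,q}$, which holds because $\alpha>0$, finishes the argument.

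The main obstacle I foresee is not conceptual but bookkeeping: verifying that the three classical estimates survive intact when the $L^p$ norms are taken over $\Le$ rather than $\T^2$, uniformly in $\eps$. The crucial tool for this is Lemma~\ref{lemma:etakbound}, which gives exactly the discrete analogue of the Bernstein/convolution estimate needed to control $\Norm{L^p(\Le)}{\delta_k(uv)}$ in terms of $L^p$ and $L^\infty$ norms of the factors at frequency $2^k$; once these discrete Bernstein bounds are in place, the summations in $j,k$ are identical to the continuous case.
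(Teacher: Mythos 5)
The paper provides no proof of this proposition; it is stated together with the duality estimate under the blanket remark that ``the proofs are omitted since they follow closely their continuous counterparts.'' Your Bony paraproduct decomposition is exactly the standard proof of that continuous counterpart (e.g. \cite[Thms~2.82, 2.85]{bahouri2011fourier}), so at the level of strategy you are doing precisely what the authors have in mind, and your observations that the left-hand side should read $\Norm{\Bb^{\beta}_{p,q}(\Le)}{fg}$ and that $\alpha+\beta>0$ must be tacitly assumed are both correct.

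Two points in your write-up are not quite right as stated, though. First, the invocation of Proposition~\ref{prop:boundBesovLp} to get $\Norm{L^\infty(\Le)}{f}\lesssim\Norm{\Bb^{\alpha}_{p,q}(\Le)}{f}$ is a misreference: that proposition gives the reverse embedding $L^p\hookrightarrow\Bb^{-\nu}_{\infty,\infty}$, which is of no help here. Second, and more substantively, the paraproduct bound for $\Pi_<(f,g)$ in terms of $\Norm{L^\infty}{f}$ (and a fortiori in terms of $\Norm{\Bb^{\alpha}_{p,q}}{f}$) does require a further condition. The step $\Norm{L^\infty(\Le)}{S_{j-1}f}\lesssim\Norm{\Bb^{\alpha}_{p,q}(\Le)}{f}$, uniformly in $j$ and $\eps$, uses Bernstein, $\Norm{L^\infty}{\delta_\ell f}\lesssim 2^{\ell d/p}\Norm{L^p}{\delta_\ell f}$, so the geometric sum over $\ell$ only closes when $\alpha>d/p$. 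Without it, one loses $d/p-\alpha$ derivatives and lands in $\Bb^{\beta+\alpha-d/p}_{p,q}$ rather than $\Bb^{\beta}_{p,q}$. Your remark ``for general $p,q$ one proceeds as in [BCD] using Bernstein's inequality'' does not resolve this: Bernstein is precisely where the condition enters, not where it disappears. In practice the statement should be read either with the extra hypothesis $\alpha>d/p$, or — closer to the way these estimates are actually used in the paper, via the duality estimate against $\Cc^{-\nu}(\Le)$ — with $f$ measured in $\Cc^{\alpha}(\Le)=\Bb^{\alpha}_{\infty,\infty}(\Le)$ rather than in $\Bb^{\alpha}_{p,q}(\Le)$.
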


\begin{proposition}[Duality for discrete Besov spaces]
\label{prop:discretedualityBesov}
Let $\alpha \in \R$, $p,q,p',q'\geq 1$ with $\frac{1}{p}+\frac{1}{p'} = \frac{1}{q}+\frac{1}{q'}=1$. There exists $C>0$ such that, uniformly over $\eps \in (0,1]$,
\[
\ang{f,g}_{\Le} \leq C \Norm{\Bb^{\alpha}_{p,q}(\Le)}{f} \Norm{\Bb^{-\alpha}_{p',q'}(\Le)}{g}\;.
\]
\end{proposition}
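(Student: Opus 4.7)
The plan is to mimic the classical proof of Besov duality in the continuous setting, taking advantage of the Littlewood--Paley decomposition together with discrete Plancherel. The starting observation is that for $f : \Le \to \R$, only finitely many of the discrete blocks $\delta_k \Ex(f) = \etaN_k \ast f$ are nonvanishing, and since $\sum_k \chi_k \equiv 1$ on $\R^d$, we have the exact identity $f(x) = \sum_{k \geq -1} \delta_k \Ex(f)(x)$ for every $x \in \Le$. I would therefore write
\[
\ang{f,g}_{\Le} = \sum_{j,k \geq -1} \ang{\delta_j \Ex(f), \delta_k \Ex(g)}_{\Le}\;.
\]

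The first key step is an almost-orthogonality reduction. Using the inversion formula \eqref{e:Fourierinversion} and the orthogonality relation $\sum_{x \in \Le}\epsilon^2 e_\omega(x) \overline{e_{\omega'}(x)} \propto \mathbf{1}_{\omega=\omega'}$ on $\LN$, one obtains a discrete Plancherel identity showing that $\ang{\delta_j \Ex(f), \delta_k \Ex(g)}_{\Le}$ is a weighted sum over $\omega \in \LN$ of $\chi_j(\omega)\chi_k(\omega) \hat f(\omega)\overline{\hat g(\omega)}$. By the support properties of the $\chi_k$ (supported in an annulus of size $2^k$, overlapping only neighbouring scales), this vanishes unless $|j-k| \leq 1$. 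Hence the double sum collapses to a diagonal band.

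The second step is Hölder applied twice. On each diagonal term, the discrete Hölder inequality in $L^p(\Le)$ gives
\[
\bigl|\ang{\delta_j \Ex(f), \delta_k \Ex(g)}_{\Le}\bigr| \leq \Norm{L^p(\Le)}{\delta_j \Ex(f)}\,\Norm{L^{p'}(\Le)}{\delta_k \Ex(g)}\;.
\]
After inserting the weights $2^{\alpha j}$ and $2^{-\alpha k}$ and using $|j-k|\leq 1$ to absorb the mismatch in a harmless multiplicative constant, one applies Hölder in $\ell^q(\N)$ with conjugate exponents $q,q'$ to the sum over $j$, producing exactly $\Norm{\Bb^{\alpha}_{p,q}(\Le)}{f}\Norm{\Bb^{-\alpha}_{p',q'}(\Le)}{g}$ up to a constant independent of $\eps$.

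The only subtle point is verifying the discrete almost-orthogonality precisely, since $\delta_k \Ex(f)$ is defined as a function on $\T^d$ but the pairing is on $\Le$; however, because on $\Le$ we have $\delta_k \Ex(f) = \etaN_k \ast f$ with Fourier support contained in $\LN \cap \supp(\chi_k)$, the above discrete Plancherel argument applies verbatim, and this is the step where I would spend the most care. Everything else is the routine two-step Hölder argument, and all estimates are manifestly uniform in $\eps \in (0,1]$ because the supports $\supp(\chi_k)$ and the Plancherel constant do not depend on $\eps$.
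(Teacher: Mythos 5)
Your proof is correct and matches the intended approach: the paper omits this proof because, as you show, the standard continuous argument — almost-orthogonality of the Paley--Littlewood blocks via (discrete) Plancherel, followed by H\"older in $L^p(\Le)$ and then in $\ell^q$, with the weight mismatch $2^{\alpha(k-j)}$ for $|j-k|\le 1$ absorbed into a constant — transfers verbatim, and all constants are manifestly uniform in $\eps\in(0,1]$. The one genuine subtlety, that the pairing lives on $\Le$ while $\delta_k\Ex(f)$ is a priori a function on $\T^d$, is exactly the point you flag and correctly resolve via the identification $\delta_k\Ex(f)=\etaN_k\ast f$ on $\Le$ together with the exact orthogonality of $\{e_\omega\}_{\omega\in\LN}$ under $\ang{\cdot,\cdot}_{\Le}$.
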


The next proposition is the main technical tool of the paper, and it allows to control the discrete Besov norm with the same discrete Laplacian of the dynamic. Recall that, in the case of continuous Besov spaces, for a differentiable function $f$ and $\nu \in (0,1)$, one has \cite[Prop.~3.8]{MourratWeberGlobal}
\begin{equation}
\label{e:besovderivboundcontinuous}
\Norm{\Bb^{\nu}_{1,1}}{ f } \lesssim \Norm{L^1}{f} + \Norm{L^1}{f}^{1-\nu}\Norm{L^1}{\nabla f}^{\nu}\;.
\end{equation}
We have the following analogue of this result.

\begin{lemma}
\label{lemma:besovregularity}
For $f: \Le \to \R$ and $\nu \in (0,1/2)$
\begin{equation}
\label{e:besovderivbound}
\Norm{\Bb_{1,1}^{\nu}(\Le)}{f} \lesssim \Norm{L^1(\Le)}{f}^{1-2\nu}  \Big(\sum_{x,y \in \Le} \epsilon^4 \Kg(x-y)\epsilon^{-1}\gamma|f(x)-f(y)| \Big)^{2\nu}+  \Norm{L^1(\Le)}{f}
\end{equation}
where the constant is independent of $\epsilon$ or $f$. 
\end{lemma}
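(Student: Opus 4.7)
My plan is to dyadically decompose $\Norm{\Bb^\nu_{1,1}(\Le)}{f} = \sum_{k \geq -1} 2^{\nu k}\Norm{L^1(\Le)}{\eta_k \ast f}$, to split the sum at a threshold $K$ to be chosen later, and, for each $k$, to decompose $f = P_\gamma f + r_f$ with $P_\gamma f := \Kg \ast f$ and $r_f := f - P_\gamma f$. Writing $D(f)$ for the quantity in round brackets on the right of \eqref{e:besovderivbound}, two preliminary estimates will be used throughout: the triangle inequality applied to $r_f(x) = \sum_y \epsilon^2 \Kg(x-y)(f(x) - f(y))$ gives $\Norm{L^1(\Le)}{r_f} \leq \gamma D(f)$, while the symmetry of $\Kg$ and the normalisation $\sum_y \epsilon^2 \Kg(y) = 1$ yield the a priori bound $\gamma D(f) \leq 2\Norm{L^1(\Le)}{f}$.

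For $k \leq K$ I would use the naive estimate $\Norm{L^1(\Le)}{\eta_k \ast f} \lesssim \Norm{L^1(\Le)}{f}$ (Young's inequality together with Lemma~\ref{lemma:etakbound}), contributing $\lesssim 2^{\nu K}\Norm{L^1(\Le)}{f}$ to the Besov sum. For $k > K$ the rough part contributes $\Norm{L^1(\Le)}{\eta_k \ast r_f} \lesssim \gamma D(f)$ uniformly. For the smooth part, using $\sum_h \epsilon^2 \eta_k(h) = 0$ for $k \geq 0$ and $\sum_h \epsilon^2|\eta_k(h)||h| \lesssim 2^{-k}$ (concentration at scale $2^{-k}$) reduces matters to the translation estimate $\Norm{L^1(\Le)}{\tau_h P_\gamma f - P_\gamma f} \lesssim |h| D(f)$, yielding $\Norm{L^1(\Le)}{\eta_k\ast P_\gamma f} \lesssim 2^{-k} D(f)$. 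To prove the translation estimate, write $\tau_h P_\gamma f - P_\gamma f = (\tau_h \Kg - \Kg) \ast f$ and use $\sum_y \epsilon^2(\tau_h\Kg - \Kg)(y) = 0$ to subtract $f(x)$ from the convolution,
\[
\Norm{L^1(\Le)}{(\tau_h\Kg - \Kg)\ast f} \leq \sum_{x,y \in \Le}\epsilon^4|\tau_h\Kg - \Kg|(x-y)|f(y) - f(x)|\;;
\]
then telescope $h$ along a lattice path of $\lceil |h|/\epsilon\rceil$ unit steps and apply the pointwise bound $|\Kg(u-\eta) - \Kg(u)| \lesssim \epsilon\gamma^{-1}\Kg(u)$ for $|\eta| = \epsilon$ (valid because $\Kg$ is a rescaling of the fixed $C^2$ profile $\KK$ in \eqref{e:samplek}, so $|\nabla\Kg| \lesssim \gamma^{-1}\Kg$ on its support), bounding a single-step contribution by $\epsilon\gamma^{-1} \cdot \gamma D(f) = \epsilon D(f)$ and summing to $|h| D(f)$.

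Assembling everything, the low-$k$ bound and the smooth high-$k$ bound combine into $2^{\nu K}\Norm{L^1(\Le)}{f} + 2^{(\nu-1)K} D(f)$; optimising $K$ produces the interpolation bound $\Norm{L^1(\Le)}{f}^{1-\nu}D(f)^\nu$. The rough high-$k$ contribution, summed over $K < k \leq K_{\max}$ with $2^{K_{\max}} \sim \epsilon^{-1} = \gamma^{-2}$, gives $2^{\nu K_{\max}}\gamma D(f) \sim \gamma^{1-2\nu}D(f)$, which, upon using $\gamma D(f) \leq 2\Norm{L^1(\Le)}{f}$, rewrites as $\Norm{L^1(\Le)}{f}^{1-2\nu}D(f)^{2\nu}$ since $\gamma^{1-2\nu} D(f) = (\gamma D(f))^{1-2\nu} D(f)^{2\nu}$. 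A short case distinction on whether $D(f) \leq \Norm{L^1(\Le)}{f}$ then absorbs the interpolation term into $\Norm{L^1(\Le)}{f} + \Norm{L^1(\Le)}{f}^{1-2\nu}D(f)^{2\nu}$, completing the proof; here the hypothesis $\nu < 1/2$ is used precisely to keep $\gamma^{1-2\nu}$ from blowing up. The main technical obstacle is the pointwise gradient bound $|\nabla\Kg| \lesssim \gamma^{-1}\Kg$ on the support of $\Kg$, which is what forces the smooth-part analysis to respect the natural scale $\gamma$ of the kernel and converts $L^1$ control on finite differences of $f$ into $L^1$ control on finite differences of $P_\gamma f$.
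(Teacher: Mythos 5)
Your dyadic splitting, the decomposition $f=\Kg\ast f + r_f$ with the clean bound $\Norm{L^1(\Le)}{r_f}\leq\gamma D(f)$ (writing $D(f)$ for the bracketed sum in \eqref{e:besovderivbound}), and the final optimisation over the threshold $K$ (including the identity $\gamma^{1-2\nu}D(f)=(\gamma D(f))^{1-2\nu}D(f)^{2\nu}$ combined with $\gamma D(f)\lesssim\Norm{L^1(\Le)}{f}$, and the case distinction absorbing the interpolation term) are all sound, and the overall skeleton matches the paper's. The one genuinely different ingredient --- the translation estimate for $\Kg\ast f$ --- is where the argument breaks. You derive it from the per-step bound
\[
\sum_{x,y\in\Le}\epsilon^4\,\big|\Kg(x-y-\eta)-\Kg(x-y)\big|\,|f(y)-f(x)|\lesssim\epsilon\,D(f)\;,\qquad |\eta|=\epsilon\;,
\]
which you justify by the pointwise inequality $|\Kg(u-\eta)-\Kg(u)|\lesssim\epsilon\gamma^{-1}\Kg(u)$, i.e.\ by $|\nabla\Kg|\lesssim\gamma^{-1}\Kg$ on the support of $\Kg$. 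This is false for any nonnegative twice-differentiable $\KK$ supported in a ball: since $\KK$ vanishes continuously at the boundary of its support, the logarithmic derivative $|\nabla\KK|/\KK$ diverges there (for instance $\KK(x)=(3-|x|)^3_+$ gives $|\nabla\KK|/\KK=3/(3-|x|)$, and the smooth bump $\KK(x)=c\exp(-1/(9-|x|^2))$ for $|x|<3$ gives $|\nabla\KK|/\KK=2|x|/(9-|x|^2)^2$). Since $\Kg(u)=\gamma^{-2}\KK(u/\gamma)$, the same failure occurs near $|u|=3\gamma$, so neither the per-step inequality nor the telescoped estimate $\Norm{L^1(\Le)}{(\Kg\ast f)(\cdot-h)-\Kg\ast f}\lesssim|h|\,D(f)$ is actually established by your argument.

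The paper avoids this by never differentiating $\Kg$. It bounds $\phi(y):=\sum_x\epsilon^2|f(x+y)-f(x)|$ directly via a self-overlap property of the kernel: there is $b_0>0$ such that $\inf_{|w|\leq b_0\epsilon\gamma^{-1}}\sum_z\epsilon^2\big(\Kg(z)\wedge\Kg(w-z)\big)\geq 1/2$, which lets one write $|f(x+y)-f(x)|$ as a $\Kg$-weighted sum of two increments through an intermediate site $x+z$; iterating along a chain yields $\phi(y)\lesssim(1\vee|y|\gamma\epsilon^{-1})\,\gamma D(f)$ with no reference to $\nabla\Kg/\Kg$. If you wish to retain your decomposition, replace the pointwise-gradient step with this overlap argument --- indeed $\Norm{L^1(\Le)}{(\Kg\ast f)(\cdot-h)-\Kg\ast f}\leq\Norm{L^1(\Le)}{f(\cdot-h)-f}=\phi(h)$ by Young's inequality, so the modulus bound for $f$ itself already supplies what you need.
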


\begin{remark}
Here we write $x-y$ for the shortest element in the corresponding equivalence class
(viewing $\Lambda_\eps$ as a quotient of $\Ze^2$ with $\Ze = \eps \Z$). In cases where this might be ambiguous one
has $\Kg(x-y)=0$ for all possible interpretations anyway.
Another equivalent interpretation is that one of the variables runs over $\Le$ and the other one runs over all of $\Ze$, $f$ being identified with its periodic continuation.
\end{remark}

Compare \eqref{e:besovderivbound} with \eqref{e:besovderivboundcontinuous}. The factor $2$ in front of 
$\nu$ depends on the scale at which $\Deltag$ changes its behaviour, and this is not the best 
result that is possible to obtain.

\begin{proof}
Rewrite the definition of $\Norm{\Bb^{\nu}_{1,1}(\Le)}{f}$ in \eqref{e:Besovdiscretedef} as
\begin{equ}[e:startingPoint]
\Norm{\Bb^{\nu}_{1,1}(\Le)}{f} = \sum_{k \geq -1} 2^{\nu k} \Norm{L^1(\Le)}{\etaN_k \ast f}
\end{equ}
where $\etaN_k$ are the projections on the Paley-Littlewood blocks defined in \eqref{e:etakdef}. In the discrete case the summation over $k$ extends up to a multiple of $\log(\epsilon^{-1})$. In the proof, since there is no possibility of confusion, we will use $L^p$ instead of $L^p(\Le)$. We will divide the sum into
\[
\sum_{-1 \leq k \leq L} 2^{\nu k} \Norm{L^1}{\etaN_k \ast f} + \sum_{L < k \leq -\log_2(\epsilon)} 2^{\nu k} \Norm{L^1}{\etaN_k \ast f}
\]
where $L$ will be chosen later. We bound the first part with
\begin{equ}[e:bound1]
\sum_{-1 \leq k \leq L} 2^{\nu k} \Norm{L^1}{\etaN_k \ast f} \leq \sum_{-1 \leq k \leq L} 2^{\nu k} \sup_{k'\le L} \Norm{L^1}{\etaN_{k'}} \Norm{L^1}{ f} \lesssim 2^{\nu L} \Norm{L^1}{ f}\:.
\end{equ}
In order to control the second summation we will now prove, for $k \geq 0$, the inequality
\begin{equ}[e:wantedIntermediate]
\Norm{L^1}{\etaN_k \ast f} \lesssim \left( 2^{-k} \vee \epsilon \gamma^{-1} \right)\sum_{x,y \in \Le} \epsilon^4 \Kg(x-y) \frac{|f(y) -f(x)|}{\epsilon\gamma^{-1}}\;.
\end{equ}
If $k \geq 0$ the projection kernel $\etaN_k$ has mean zero and therefore
\[
\Norm{L^1}{\etaN_k \ast f} = \sum_{x \in \Le} \epsilon^2 \left|\sum_{y \in \Le} \epsilon^2 \etaN_k(-y) \Big(f(x+y) - f(x)\Big)\right|\:.
\]
At this point the treatment differs from the proof of \cite[Prop.~3.8]{MourratWeberGlobal}, because of the particular form of the Laplacian. The definition of $\Kg$ (in particular the continuity of $\KK$) implies that 
there exists $b_0 > 0$ such that
\[
\inf_{ |w|\leq b_0 \epsilon\gamma^{-1}} \sum_{ z \in \Ze}\epsilon^2 \big(\Kg(z) \wedge \Kg(w-z)\big) \geq 1/2\:.
\]
If $|y| \leq b_0 \epsilon\gamma^{-1}$, then
\begin{multline*}
\Big|f(x+y) - f(x)\Big| \leq 2 \Big( \sum_{ z \in \Ze}\epsilon^2 \big(\Kg(z) \wedge \Kg(y-z)\big)  \Big) \Big|f(x+y) - f(x)\Big|\\
\leq 2\epsilon^2 \sum_{ z \in \Ze } \Kg(y-z)  \Big|f(x+y) - f(x+z)\Big| + \Kg(z)  \Big|f(x+z) - f(x)\Big|\:.
\end{multline*}
If $|y| \geq b_0 \epsilon\gamma^{-1}$ on the other hand, then there exists a path $\{y_0,y_1,\dots,y_n\}$ in $\Ze$ of length $n$ proportional to $|y|\gamma \epsilon^{-1}$ connecting $y_0 = 0$ with $y_n = y$ and such that $|y_{j+1}-y_j| \leq b_0 \epsilon \gamma^{-1}$ for $j= 0,\dots,n-1$. We can then apply the above inequality to every step of the path. Combining these bounds, we obtain
\begin{equ}[e1]
\Norm{L^1}{\etaN_k \ast f} \lesssim \sum_{y \in \Le} \epsilon^2 |\etaN_k(-y)| \{ |y|\gamma \epsilon^{-1} \vee 1 \} \sum_{x \in \Le, z  \in \Ze} \epsilon^4 \Kg(z )  |f(x+z) -f(x)|\;,
\end{equ}
and \eqref{e:wantedIntermediate} follows from the fact that
\[
\sum_{y \in \Le} \epsilon^2 |\etaN_k( y)| \{ |y|\epsilon^{-1} \gamma \vee  \gamma \} \lesssim   \Norm{L^1}{\etaN_k} + \epsilon^{-1} \gamma 2^{-k} \sum_{y \in \Le}2^k|y||\etaN_k(y)| \lesssim 1 \vee \epsilon^{-1} \gamma 2^{-k}\:.
\]
Summing over $k$ yields
\begin{equs}
\sum_{L < k \leq  \log_2(\epsilon^{-1})} 2^{\nu k} \Norm{L^1}{\etaN_k \ast f}& \\
\lesssim  \sum_{L < k \leq \log_2(\epsilon^{-1})}& 2^{\nu k}\{ \epsilon \gamma^{-1} \vee 2^{-k}\} \sum_{x  \in \Le, z\in \Ze} {\epsilon^4 \Kg(z ) \over \epsilon \gamma^{-1}}  |f(x+z) -f(x)|\;.
\end{equs}
At this point we use the fact that $\epsilon \gamma^{-1} \vee 2^{-k} \leq 2^{-\frac{k}{2}}$ for $k \leq -\log_2(\epsilon) = -2\log_2(\epsilon\gamma^{-1})$ and, recalling \eqref{e:startingPoint} and \eqref{e:bound1}, 
we obtain
\[
\Norm{\Bb^{\nu}_{1,1}(\Le)}{f}  \lesssim 2^{\nu L} \Norm{L^1}{f}  + 2^{\left(\nu-\frac{1}{2}\right)L}\sum_{x  \in \Le, z\in \Ze} {\epsilon^3\gamma \Kg(z )} |f(x+z) -f(x)|\;.
\]
The claim now follows by optimising this expression over $L$.
(The second term in \eqref{e:besovderivbound} comes from the fact that we had to impose $L > 1$.)
\end{proof}

The next proposition quantifies the decay in the Fourier space of the kernel $\Kg$ used in the article. The proof is given in \cite[Lem.~8.2]{MourratWeber}

\begin{proposition}[estimates on the kernel]
\label{prop:kernelbounds}
For $\omega \in \LN$ and $\gamma$ small enough the following inequalities hold
\begin{itemize}
\item There exists a positive constant $C>0$ such that,
\[
|\hKg(\omega)|\leq 1  \wedge \frac{\gamma^{-2}}{|\omega|^2}
\]
\item There exists a positive constant $c>0$ such that, for $|\omega| \geq \gamma^{-1}$
\[
1 - \hKg(\omega) \geq c \left(|\gamma \omega|^2\wedge 1\right)
\]
\end{itemize}
\end{proposition}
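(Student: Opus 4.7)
The plan is to view $\Kg$ as the discretisation of the macroscopic mollifier $x \mapsto \gamma^{-2}\KK(\gamma^{-1}x)$. Since $\epsilon = \gamma^2$, one has $\Kg(x) = \epsilon^{-2}\kg(\epsilon^{-1}x) \simeq \gamma^{-2}\KK(\gamma^{-1}|x|)$, which is concentrated on the ball of radius $3\gamma$ with amplitude $\gamma^{-2}$. Since the lattice spacing $\epsilon$ is a factor $\gamma^{-1}$ finer than the support scale, the discrete Fourier transform is well approximated by its continuous counterpart: the change of variable $y = \gamma^{-1}x$ combined with a Riemann-sum estimate gives $\hKg(\omega) = I(\gamma\omega) + \Oo(\epsilon(1+|\gamma\omega|))$ uniformly in $\omega \in \LN$, where
\[
I(\xi) \eqdef \Bigl(\textstyle\int \KK(|y|)\,dy\Bigr)^{-1} \int \KK(|y|) e^{-i\pi\xi\cdot y}\,dy\;,
\]
so that $I(0)=1$.

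The first bound splits naturally. The trivial inequality $|\hKg(\omega)|\le 1$ follows immediately from $\Kg\ge 0$ combined with the normalisation $\sum_{x\in\Le}\epsilon^2 \Kg(x)=1$. For the decay bound $|\hKg(\omega)| \le C\gamma^{-2}/|\omega|^2$ I would sum by parts twice in a direction $j$ maximising $|\omega_j|$, using the discrete identity $e_\omega(x+\epsilon e_j)-e_\omega(x)=(e^{i\pi\epsilon\omega_j}-1)e_\omega(x)$ and the estimate $|e^{i\pi\epsilon\omega_j}-1|^{-1}\lesssim(\epsilon|\omega_j|)^{-1}$ valid on $|\epsilon\omega_j|\le 1$; the resulting second discrete difference of $\Kg$ is of size $\epsilon^2\gamma^{-4}$ on a set of $\simeq\gamma^{-2}$ lattice points, so $\sum_x \epsilon^2 |\text{sec.\ diff}|\lesssim \epsilon^2\gamma^{-2}$, which after dividing by $\epsilon^2\omega_j^2$ produces the required factor $\gamma^{-2}/|\omega|^2$. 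Equivalently, via the continuous comparison above, the bound reduces to $|I(\xi)|\lesssim |\xi|^{-2}$, which follows from integrating by parts twice using $\KK\in C^2$ with compact support.

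For the lower bound in the regime $|\omega|\ge\gamma^{-1}$ (so that $|\gamma\omega|^2\wedge 1=1$), I would split into two sub-regimes. If $|\gamma\omega|\ge M$ for an absolute constant $M$ to be chosen, the decay bound just established yields $|\hKg(\omega)|\le M^{-2}\le 1/2$, hence $1-\hKg(\omega)\ge 1/2$. In the intermediate annulus $1\le|\gamma\omega|\le M$, the key observation is that $I(\xi)<I(0)=1$ strictly for $\xi\ne 0$: indeed
\[
1-I(\xi) = \Bigl(\textstyle\int\KK\Bigr)^{-1}\int \KK(|y|)\bigl(1-\cos(\pi\xi\cdot y)\bigr)\,dy\;,
\]
which is a nonnegative integral of a nonnegative integrand vanishing only on the discrete set $\{y:\xi\cdot y\in 2\Z\}$; strict positivity thus follows from the continuity and nontriviality of $\KK$ on an open set. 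Continuity of $I$ and compactness of the annulus $\{1\le|\xi|\le M\}$ then yield a uniform lower bound $1-I(\xi)\ge c_0>0$ there, and the comparison error $\Oo(\epsilon M)=\Oo(\gamma^2 M)$ is absorbed by taking $\gamma$ small enough.

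The main technical point is making the discrete-to-continuous comparison uniform in $\omega$ over the whole range $|\omega|\lesssim M\gamma^{-1}$. Since the oscillation scale $|\omega|^{-1}\gtrsim\gamma$ stays a factor $\gamma^{-1}$ above the lattice spacing $\epsilon=\gamma^2$, a Riemann-sum estimate of the form $\Oo(\epsilon^2\cdot\|\nabla^2(\gamma^{-2}\KK(\gamma^{-1}\cdot)e_\omega)\|_\infty)=\Oo(\epsilon^2(\gamma^{-4}+\gamma^{-2}|\omega|^2))$ suffices, but one has to track this error carefully, crucially using the hierarchy $\epsilon\ll\gamma$. A related Taylor expansion $I(\xi)=1-c_0|\xi|^2+\Oo(|\xi|^4)$ near $\xi=0$, based on the isotropy of $\KK$, would extend the argument to establish the complementary bound $1-\hKg(\omega)\ge c|\gamma\omega|^2$ for $|\omega|\le\gamma^{-1}$ that is actually needed elsewhere in the paper, even though it is not formally part of this statement.
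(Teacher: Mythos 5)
The paper itself gives no internal proof of this proposition—it refers to \cite[Lem.~8.2]{MourratWeber}—so there is no \virg{paper argument} to compare against; your self-contained proof follows the natural route. The positivity/normalisation bound $|\hKg|\le 1$, the double discrete summation by parts for the decay (your bookkeeping of $\epsilon^2\gamma^{-4}$ per site, $\simeq\gamma^{-2}$ sites, divided by $(\epsilon|\omega_j|)^2$, is correct), and the strict-positivity-plus-compactness argument giving $1-I(\xi)\ge c_0>0$ on $\{1\le|\xi|\le M\}$ and $|\hKg|\le 1/2$ on $\{|\gamma\omega|\ge M\}$ are all sound, as is the observation that the bound reduces to a question about the scaling function $I$ once the discrete-to-continuous comparison is in place.

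The one quantitative slip is in the Riemann-sum error. As written, $\Oo\bigl(\epsilon^2\Norm{\infty}{\nabla^2(\Kg\,e_\omega)}\bigr)=\Oo\bigl(\epsilon^2(\gamma^{-4}+\gamma^{-2}|\omega|^2)\bigr)$ evaluates, using $\epsilon=\gamma^2$, to $\Oo(1+|\gamma\omega|^2)$, which is never small. The second-order Riemann-sum error over $\Le$ is controlled by $\epsilon^2\Norm{L^1}{\nabla^2(\Kg\,e_\omega)}$, not by the $L^\infty$ norm, and the $L^1$ norm carries the support volume $|\supp\Kg|\simeq\gamma^2$. This yields the correct error $\Oo\bigl(\gamma^2(1+|\gamma\omega|)^2\bigr)$, which vanishes uniformly on the intermediate annulus $1\le|\gamma\omega|\le M$ and is consistent with the heuristic $\Oo(\epsilon(1+|\gamma\omega|))$ you state earlier. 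With this correction the argument closes. Your final remark is also on point: the Taylor expansion $I(\xi)=1-c_0|\xi|^2+\Oo(|\xi|^4)$, which uses the isotropy of $\KK$, is exactly what underlies the low-frequency estimate $\gamma^{-2}(1-\hKg(\omega))\gtrsim|\omega|^2$ invoked in the proof of Theorem~\ref{trm:tightness}, even though only the high-frequency branch is stated in this proposition.
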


For completeness, we state the following simple but crucial comparison test which can be found in this specific form in
\cite{tsatsoulis2016spectral}.

\begin{lemma}[Comparison test]
\label{lemma:comparison}
Let $\lambda > 1$ and $f:[0,T] \to \R^+$ differentiable satisfying for $t \in [0,T]$
\[
f'(t) + 2 c_1 \left(f(t) \right)^{\lambda}\leq c_2\;.
\]
Then for $t \in [0,T]$
\[
f(t) \leq \frac{f(0)}{\left( 1 +  c_1 (\lambda-1) t f(0)^{\lambda-1}\right)^{\frac{1}{\lambda-1}}} \vee \left( \frac{c_2}{c_1}\right)^{\frac{1}{\lambda}} \leq \left(c_1(\lambda-1) t\right)^{-\frac{1}{\lambda-1}} \vee \left( \frac{c_2}{c_1}\right)^{\frac{1}{\lambda}}\;.
\]
\end{lemma}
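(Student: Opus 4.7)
The plan is to split the analysis according to whether $f$ ever reaches the equilibrium threshold $M := (c_2/c_1)^{1/\lambda}$, which is the value at which the dissipative term $2c_1 f^\lambda$ exactly balances the source $c_2$. The first observation is that if $f(s) \leq M$ for some $s \in [0,t]$, then $f(r) \leq M$ for all $r \in [s,t]$: indeed, at any $r$ where $f(r) = M$ the hypothesis gives $f'(r) \leq c_2 - 2c_1 M^\lambda = -c_2 \leq 0$, so $f$ cannot re-cross the threshold from below. In this case one already obtains $f(t) \leq (c_2/c_1)^{1/\lambda}$, which is the right branch of the maximum.

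In the complementary case $f(s) > M$ for every $s \in [0,t]$, the inequality $2c_1 f(s)^\lambda > 2c_2$ allows one to absorb the source and upgrade the hypothesis to the autonomous bound $f'(s) \leq -c_1 f(s)^\lambda$ throughout $[0,t]$. Since $f > 0$ there, I would then introduce $\varphi(s) := f(s)^{1-\lambda}$; using $1-\lambda < 0$, multiplying the differential inequality by the negative quantity $(1-\lambda) f(s)^{-\lambda}$ flips the sign and gives $\varphi'(s) \geq c_1(\lambda-1)$. Integrating from $0$ to $t$ produces $f(t)^{1-\lambda} \geq f(0)^{1-\lambda} + c_1(\lambda-1)t$, which, after taking the $-1/(\lambda-1)$ power and rearranging, is exactly the left branch $f(t) \leq f(0)\bigl(1+c_1(\lambda-1)t\, f(0)^{\lambda-1}\bigr)^{-1/(\lambda-1)}$.

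Combining the two cases establishes the first bound. The second, cruder bound is then pure algebra: dropping the nonnegative term $f(0)^{1-\lambda}$ in the integrated inequality yields $f(t)^{1-\lambda} \geq c_1(\lambda-1)t$, so the left branch is dominated by $(c_1(\lambda-1)t)^{-1/(\lambda-1)}$, while the right branch is unchanged. I do not anticipate a genuine obstacle here; the only small technical point is making the ``cannot re-cross $M$'' argument rigorous for a merely differentiable $f$, but this is standard: one works with $\tau := \inf\{s \in [0,t] : f(s) \leq M\}$, applies the power-law comparison on $[0,\tau)$, and uses the sign of $f'$ at the threshold together with continuity to conclude that $f$ stays below $M$ on $[\tau,t]$.
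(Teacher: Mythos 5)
Your argument is correct. Note that the paper does not actually provide a proof of this lemma; it only records the statement, attributing it to \cite{tsatsoulis2016spectral}. Your proof is the standard one: split at the equilibrium threshold $(c_2/c_1)^{1/\lambda}$, use that the threshold cannot be re-crossed from below (since $f'\le -c_2$ there), and on the super-threshold interval absorb the source to get $f'\le -c_1 f^\lambda$, which linearises under the substitution $\varphi=f^{1-\lambda}$ and integrates to the stated algebraic bound; the second, cruder bound follows by discarding $f(0)^{1-\lambda}\ge 0$. The only caveat, which you flag, is that the re-crossing argument needs $c_2>0$ (for $c_2=0$ one simply observes $f$ is non-increasing), but this is the case in all applications in the paper.
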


\bibliographystyle{Martin}
\bibliography{references}

\end{document}